\newtheorem{thm}{Theorem}[section]
\newtheorem{prop}[thm]{Proposition}
\newtheorem{lem}[thm]{Lemma}
\newtheorem{cor}[thm]{Corollary}
\newtheorem{conj}[thm]{Conjecture}
\theoremstyle{definition}
\newtheorem{defn}[thm]{Definition}
\newtheorem{rem}[thm]{Remark}
\newtheorem{exmp}[thm]{Example}
\newtheorem{ques}[thm]{Question}
\newtheorem{fact}[thm]{Fact}
\newcommand{\abs}[1]{\lvert{#1}\rvert}
\renewcommand{\bar}[1]{\overline{#1}}
\newcommand{\bigset}[2]{ \bigl\{ \, {#1} \bigm| {#2} \, \bigr\} }
\newcommand{\field}[1]{\mathbb{#1}}
\newcommand{\Z}{\field{Z}}
\newcommand{\Q}{\field{Q}}
\newcommand{\N}{\field{N}}
\newcommand{\HH}{\field{H}}
\DeclareMathOperator{\Isom}{Isom}
\DeclareMathOperator{\Aut}{Aut}
\DeclareMathOperator{\CAT}{CAT}
\newcommand{\showcomments}{yes}
\newsavebox{\commentbox}
\begin{document}

\title{Quasi-isometries of pairs: surfaces in graph manifolds}

\author{Hoang Thanh Nguyen}
\address{Dept.\ of Mathematical Sciences\\
University of Wisconsin--Milwaukee\\
P.O.~Box 413\\
Milwaukee, WI 53201\\
USA}
\email{nguyen36@uwm.edu}
\date{\today}

\begin{abstract}
We show there exists a closed graph manifold $N$ and infinitely many non-separable, horizontal surfaces $ \{S_{n} \looparrowright N\}_{n \in \N}$ such that there does not exist a quasi-isometry $\pi_1(N) \to \pi_1(N)$ taking $\pi_1(S_{n})$ to $\pi_1(S_{m})$ within a finite Hausdorff distance when $n \neq m$.
\end{abstract}

\subjclass[2000]{%
20F67, % Hyperbolic groups and nonpositively curved groups
20F65} % Geometric group theory
\maketitle
%%%%%%%%%%%%%%%%%%%%%%%%%%%%%%%%%%%%%%%%%%%%%%%%%

\section{Introduction}
\label{sec:Introduction}
A finitely generated group $G$ can be considered as a metric space when we equip $G$ with the word metric from a finite generating set. With different finite generating sets on $G$ we have different metrics on $G$, however such  metric spaces are unique up to \emph{quasi-isometric} equivalence. The notion of quasi-isometry that ignores small scale details is especially significant in geometric group theory following the work of Gromov. Two quasi-isometries of $G$ are called equivalent if they are within finite distance from each other. The group of quasi-isometries of $G$, denoted by $QI(G)$ is the set of equivalence classes of quasi-isometries $G \to G$ with the canonical operation (i.e, composition of maps). We note that if two finitely generated groups are quasi-isometric then their corresponding quasi-isometry groups are isomorphic.

A compact, orientable, irreducible $3$--manifold $N$ with empty or toroidal boundary is \emph{geometric} if its interior admits a geometric structure in the sense of Thurston. A non-geometric manifold $N$ is called a \emph{graph manifold} if all the blocks in its JSJ decomposition are Seifert fibered spaces. Quasi-isometric classification of graph manifolds has been studied by Kapovich-Leeb \cite{KL98} and a complete quasi-isometric classification for fundamental groups of graph manifolds is given by Behrstock-Neumann in \cite{NeuBeh08}. In particular, Behrstock-Neumann proved that the fundamental groups of all closed graph manifolds are quasi-isometric.  Thus, there is exactly one quasi-isometry group of fundamental group of closed graph manifolds. When $N$ is a closed graph manifold, the construction of quasi-isometries of Behrstock-Neumann is very flexible and produces many quasi-isometric classes in $QI(\pi_1(N))$.

A subgroup $H \le G$ is called \emph{separable} if for any $g \in G-H$ there exists a finite index subgroup $K \le G$ such that $H \le K$ and $g \notin K$. A horizontal surface $S \looparrowright N$ in a graph manifold $N$ is called \emph{separable} if $\pi_1(S)$ is a separable subgroup  in $\pi_1(N)$. Recently, Hruska and the author \cite{Hruska-Nguyen} show that the distortion of $\pi_1(S)$ in $\pi_1(N)$ is quadratically distorted whenever the surface is separable and is exponentially distorted otherwise. Therefore, there does not exist any quasi-isometry of fundamental groups of closed graph manifolds mapping a separable, horizontal surface to a non-separable, horizontal surface.

The purpose of this paper is trying to understand whether such quasi-isometries exist which map any separable (resp.\.non-separable) surface to another separable (resp.\,non-separable) surface.

If the two surfaces are both separable or both non-separable, then the subgroup distortion is not an useful quasi-isometric invariant to look at for the purpose above. Beside subgroup distortion, we remark that there are several other key quasi-isometric invariants of a pair $(G,H)$ in literature such as upper, lower relative divergence \cite{Hung15}, and $k$--volume distortion ($k \ge 1$) \cite{Bennett2011}, \cite{GerstenArea}. However, again the $k$--volume distortion is not an useful quasi-isometric invariant for our purpose above. We show that $k$--volume distortion of the surface subgroup in the $3$--manifold group is always trivial if $k \ge 3$, linear when $k=2$, and quadratic (resp.\,exponential) when $k=1$ and the surface is separable (resp.\,non-separable). Relative divergence which is introduced by Tran \cite{Hung15} is quite technical and difficult to compute in general. 
Recent work of Tran \cite{Tran17} allows us to show that the upper (lower) relative divergence of a separable, horizontal surface in a graph manifold is quadratic (linear) (see Appendix), however the author does not know the upper relative divergence and lower relative divergence in the non-separable case.

So far, none of the invariants discussed above can distinguish between two separable surfaces or two non-separable surfaces. The following questions are natural to ask.

\begin{ques}
\label{ques:non-separablequestion}
Given a closed graph manifold $N$. Are all pairs $\bigl ( \pi_1(N), \pi_1(S) \bigr)$ and $\bigl ( \pi_1(N), \pi_1(S') \bigr)$ with $S$ and $S'$ non-separable in $N$, quasi-isometric? 
\end{ques}

\begin{ques}
\label{ques:separablequestion}
Given a closed graph manifold $N$. Are all pairs $\bigl ( \pi_1(N), \pi_1(S) \bigr)$ and $\bigl ( \pi_1(N), \pi_1(S') \bigr)$ with $S$ and $S'$ separable in $N$, quasi-isometric? 
\end{ques}

In the questions above, two pairs $(G,H)$ and $(G',H')$ with $H\le G$ and $H'\le G'$ are \emph{quasi-isometric} if there is a quasi-isometry $G \to G'$ mapping  $H$ to $H'$ within a finite Hausdorff distance.

In this paper we show the answer to the Question~\ref{ques:non-separablequestion} is no.  We give examples of non-separable surfaces where such quasi-isometries never exist. The main theorem of this paper is the following

\begin{thm}
\label{thm:main}
There exists a closed simple graph manifold $N$ and infinitely many non-separable, horizontal surface $S_{n} \looparrowright N$ such that none of the pairs $\bigl (\pi_1(N),\pi_1(S_{n}) \bigr )$ and $\bigl (\pi_1(N),\pi_1(S_{m}) \bigr )$ are quasi-isometric when $n \neq m$
\end{thm}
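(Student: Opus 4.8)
The plan is to find a quasi-isometry invariant of the pair $\bigl(\pi_1(N),\pi_1(S)\bigr)$ that is finer than subgroup distortion or the various volume distortions, and that takes infinitely many distinct values as $S$ ranges over a suitable family of non-separable horizontal surfaces. A natural candidate comes from the internal structure of how $S$ meets the JSJ pieces of $N$. Since $S$ is horizontal, $S \cap N_v$ is a horizontal surface-with-boundary in each Seifert piece $N_v$, and the numerical data of these pieces — the degrees of the covers $S \cap N_v \to (\text{base orbifold of } N_v)$, equivalently the intersection numbers of $\pi_1(S)$ with the fiber and boundary subgroups — are encoded in the Bass--Serre tree picture. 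The key point is that $\pi_1(S)$ acts on the Bass--Serre tree $T$ of $N$ with quotient a finite graph, and on each edge the two ``fiber-degree'' numbers on the two sides must satisfy a matching (gluing) condition; non-separability is detected by a ratio of these degrees being non-trivial (this is essentially the criterion used in \cite{NeuBeh08} and \cite{Hruska-Nguyen}). I would build $S_n$ so that the collection of these local degree data grows with $n$ in a way that cannot be matched by any quasi-isometry.

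First I would recall precisely the structure theory: a quasi-isometry $\phi\colon \pi_1(N)\to\pi_1(N)$ coarsely permutes the cosets of the vertex (Seifert) subgroups and the cosets of the edge (torus) subgroups, inducing an isomorphism (or at least a quasi-isometry-respecting map) of the Bass--Serre trees, and moreover it coarsely respects the fibered structure — this is the rigidity at the level of pieces established by Kapovich--Leeb and refined by Behrstock--Neumann. Hence if $\phi$ sends $\pi_1(S_n)$ to within finite Hausdorff distance of $\pi_1(S_m)$, then the induced tree map carries the $\pi_1(S_n)$-invariant subtree to the $\pi_1(S_m)$-invariant subtree compatibly with the coarse fibered structure, which forces a matching of the combinatorial/numerical data (the fiber and boundary degrees, up to the controlled ambiguity a quasi-isometry allows — roughly, up to a bounded multiplicative constant on each piece, but \emph{not} arbitrary rescaling because the gluing conditions are rigid across edges). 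The second step is the construction: take a fixed simple closed graph manifold $N$ (two Seifert pieces glued along a torus suffices, or use the Behrstock--Neumann normal form) and for each $n$ build a horizontal surface $S_n$ whose fiber-degree over one piece is $p_n$ and over the other is $q_n$ with $\gcd$ and ratio data chosen so that $S_n$ is non-separable (ratio $\neq 1$ in the relevant sense) and so that the ``non-separability ratio'' or the orbifold Euler characteristic data $\{p_n/q_n\}$ (or the set of prime divisors, or the precise unordered tuple of degrees that survives the quasi-isometry ambiguity) are pairwise distinct. The third step is to verify these are genuinely horizontal, closed, immersed, and non-separable — standard via Rubinstein--Wang / Wang--Yu type existence results, or by explicit gluing of orbifold covers — and that the invariant I extracted is indeed preserved by pair-quasi-isometries, completing the contradiction.

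The main obstacle I expect is pinning down exactly \emph{which} numerical data survives a quasi-isometry. A quasi-isometry of $\pi_1(N)$ does not remember the actual degrees of the covers $S\cap N_v$ over the base orbifolds — it only remembers data up to the flexibility of the Behrstock--Neumann quasi-isometries, which can scramble a lot (they permute pieces, and within a piece they can act by things coarser than the Seifert structure allows geometrically). So I need to identify a quantity that is (a) a bona fide coarse invariant of the pair, provably so, and (b) still distinguishes infinitely many of the $S_n$. The safest route is to work with the \emph{pattern} that $\pi_1(S)$ cuts out in the boundary/asymptotic structure: the collection of parabolic-like subgroups $\pi_1(S)\cap g\pi_1(N_v)g^{-1}$ up to commensurability, together with the coarse intersection data between these, viewed as a ``quasi-isometry invariant decorated graph.'' Establishing that this decorated graph, or a numerical reduction of it such as a finite multiset of rationals invariant under the allowable rescalings, is a pair-QI invariant is the technical heart; once that is in hand, the construction of $S_n$ realizing infinitely many values of it is a matter of careful bookkeeping with orbifold covers, and the non-separability is read off from the same data via the \cite{Hruska-Nguyen} criterion.
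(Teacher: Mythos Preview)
Your approach differs from the paper's, and the gap you yourself flag is genuine and not bridged. You propose to extract a combinatorial invariant --- a decorated quotient graph recording fiber-degree or slope data --- and to argue it is preserved by pair quasi-isometries via Kapovich--Leeb / Behrstock--Neumann rigidity. The intuition that slope (spirality) data is what ultimately distinguishes the surfaces is correct, but the Behrstock--Neumann quasi-isometries of a closed graph manifold group are extremely flexible, and there is no reason to expect they preserve any finite decorated-graph invariant of a horizontal surface up to anything finer than the ``bounded multiplicative error on each piece'' you mention, which is not obviously enough. You name the obstacle (``establishing that this decorated graph \ldots\ is a pair-QI invariant is the technical heart'') and stop there; without that step the construction of the $S_n$ accomplishes nothing.

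The paper sidesteps this by working with a quantity that is \emph{a priori} a quasi-isometry invariant of the pair: the exponential base of the subgroup distortion. Concretely, $S_n$ is built (Lemma~\ref{lem:existence}) with governor $\epsilon_n = 2n+1$ and a curve $\gamma_n$ of spirality $w(\gamma_n) = (2n+1)^2$. From \cite{Hruska-Nguyen} one extracts (Fact~\ref{fact:1}) points $\tilde{x}_j \in \tilde{S}_n$ with $d_{\tilde{S}_n}(\tilde{x}_0,\tilde{x}_j) \ge L\, w(\gamma_n)^j$ while the $\tilde{N}$-geodesic $[\tilde{x}_0,\tilde{x}_j]$ crosses about $4j$ JSJ blocks, and (Fact~\ref{fact:2}) for any $x,y \in \tilde{S}_m$ the bound $d_{\tilde{S}_m}(x,y) \le L'\epsilon_m^{k}+L'd(x,y)$ with $k$ the block-count of $[x,y]$. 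The only rigidity input needed is that a quasi-isometry of $\tilde{N}$ changes this block-count by at most an additive constant (Lemma~\ref{lem:important}, via Kapovich--Leeb). Chaining these through a pair quasi-isometry $(\tilde{N},\tilde{S}_n)\to(\tilde{N},\tilde{S}_m)$ together with Lemma~\ref{lem:invariant of length} yields $w(\gamma_n)^j \le C\,\epsilon_m^{4j+D}$ for all $j$, hence $w(\gamma_n)\le \epsilon_m^4$, i.e.\ $2n+1\le (2m+1)^2$; passing to a sparse subsequence with $(2m+1)^2 < 2n+1$ for $m<n$ gives the contradiction. So rather than proving the spirality data is directly a pair-QI invariant, the paper shows it bounds from both sides a known pair-QI invariant --- the distortion growth rate --- tightly enough to separate infinitely many surfaces.
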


No example is currently known for separable, horizontal surfaces $S \looparrowright N$ and $S' \looparrowright N$ such that $\bigl (\pi_1(N),\pi_1(S) \bigr )$ and $\bigl (\pi_1(N),\pi_1(S') \bigr )$ are not quasi-isometric.
Thus, we propose the following conjecture.
\begin{conj}
\label{conj}
Let $N$ and $M$ be two closed graph manifolds. Let $S \looparrowright N$ and $S' \looparrowright M$ be two separable, horizontal surfaces. Then there is a quasi-isometry from $\pi_1(N)$ to $\pi_1(M)$ mapping $\pi_1(S)$ to $\pi_1(S')$ in a finite Hausdorff distance.
\end{conj}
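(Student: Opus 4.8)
I would prove Conjecture~\ref{conj} by building the quasi-isometry directly from the tree-of-spaces description of graph manifolds, after first reducing the pair to a normal form in the spirit of Behrstock--Neumann. Recall that $\widetilde{N}$ is a graph of spaces over the Bass--Serre tree $T_N$ with vertex spaces quasi-isometric to $\Hyp^2 \times \R$ and edge spaces quasi-isometric to $\R^2$. A horizontal surface $S \looparrowright N$ lifts to a connected $\pi_1(S)$-invariant subspace $\widetilde{S} \subseteq \widetilde{N}$ which is itself a graph of spaces over a subtree $T_S \subseteq T_N$: its intersection with each vertex space is a disjoint union of horizontal leaves $\Hyp^2 \times \{\text{pt}\}$ and its intersection with each edge space is a disjoint union of lines. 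Since $\pi_1(S)$ is separable it has quadratic distortion in $\pi_1(N)$ by \cite{Hruska-Nguyen}, so its cosets are coarsely separated; one then checks that a quasi-isometry of pairs $\bigl(\pi_1(N),\pi_1(S)\bigr) \to \bigl(\pi_1(M),\pi_1(S')\bigr)$ is equivalent to a quasi-isometry $\widetilde{N} \to \widetilde{M}$ carrying the $\pi_1(N)$-orbit of $\widetilde{S}$ to the $\pi_1(M)$-orbit of $\widetilde{S'}$ within bounded Hausdorff distance. So it suffices to produce such a \emph{pattern-preserving} quasi-isometry.

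\textbf{A normal form for the pattern.} Behrstock--Neumann show that all closed graph-manifold groups are quasi-isometric by passing, through very flexible quasi-isometries, to a common standard model in which every JSJ block is $\Sigma_{0,k} \times S^1$ and all gluings are flip maps, the combinatorial input being a bisimilarity of the decorated bi-infinite underlying graphs. I would upgrade this to pairs by encoding $\widetilde{S} \subseteq \widetilde{N}$, up to pattern-preserving quasi-isometry, by a decorated tree over $T_N$ recording the number of parallel horizontal leaves of $\widetilde{S}$ in each vertex space together with the ratios of fiber-covering degrees of $S$ across each edge. Separability is the key constraint: a horizontal surface whose degree data is ``unbalanced'' around some cycle of the JSJ graph is precisely one whose subgroup is exponentially distorted, again by \cite{Hruska-Nguyen}; hence a separable $S$ has balanced decoration, which (after absorbing the allowed flexibility) is bisimilar to the trivial one --- one leaf per vertex, all ratios $1$. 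So the pattern attached to any separable horizontal surface is pattern-quasi-isometric to the standard pattern $\mathcal{P}_0$ in the standard model.

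\textbf{Assembling the map.} Given separable $S \looparrowright N$ and $S' \looparrowright M$, composing the pattern-quasi-isometries of the previous step reduces the problem to the case $N=M$ equal to the standard model and $S=S'$ equal to the standard surface, where the two decorated trees are literally identical. The quasi-isometry is then built by induction over $T_N$: in each vertex space $\Hyp^2 \times \R$ match the family of horizontal leaves $\Hyp^2 \times \{\text{pt}\}$, then extend across each edge space $\R^2$ matching the pattern of parallel lines, organising the choices by a Behrstock--Neumann back-and-forth on the bi-infinite tree so that they are globally coherent; a relative version of the tree-of-spaces combination lemma keeps the quasi-isometry constants uniform.

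\textbf{The hard part.} The delicate step is the normal form: making precise that separability is equivalent to triviality of the decoration and that the decoration is a \emph{complete} invariant of the pair up to pattern-preserving quasi-isometry. The direction actually needed --- separable $\implies$ balanced $\implies$ bisimilar to trivial --- should follow from the distortion dichotomy of \cite{Hruska-Nguyen} together with a Behrstock--Neumann-style bisimilarity argument, but carrying the surface-pattern data through their flexibility moves (changing base genus, subdividing edges, flipping gluings) without losing control of Hausdorff distances is the technical heart of the proof. A secondary difficulty is the reduction in the first step: one must verify that, for a quadratically distorted surface subgroup, quasi-isometry of pairs is genuinely equivalent to the existence of a pattern-preserving quasi-isometry of universal covers, which requires the cosets of $\pi_1(S)$ to be coarsely well separated and the embedding to be coarsely convex enough --- both of which the quadratic distortion bound should supply.
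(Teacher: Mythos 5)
The statement you are addressing is Conjecture~\ref{conj}, which the paper explicitly leaves \emph{open}: no proof is given anywhere in the text, and the Appendix only supplies supporting evidence by checking that subgroup distortion, $k$--volume distortion, and relative upper/lower divergence take the same values for all separable horizontal surfaces, hence cannot obstruct such a quasi-isometry of pairs. So there is no proof in the paper to compare yours against, and your text is in any case a strategy outline rather than a proof --- as you yourself concede in the paragraph labelled ``the hard part.''

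The gaps you flag there are genuine, and they are exactly where the content of the conjecture lies. Two points deserve emphasis. First, ``separable $\implies$ balanced $\implies$ bisimilar to the trivial decoration'' does not follow from the distortion dichotomy of \cite{Hruska-Nguyen}: separability is equivalent (Rubinstein--Wang, cf.\ Remark~\ref{rem:main}) to triviality of the spirality homomorphism, i.e.\ to the product of slopes around every cycle of $\Omega_S$ being $1$; the individual slopes and fiber-degree ratios need not be $1$, and, more importantly, the object to be matched is not a finite decoration but the full orbit pattern of horizontal leaves of $\widetilde{S}$ inside each vertex space quasi-isometric to $\Hyp^2\times\R$ --- an infinite discrete family whose spacings and holonomy across edge spaces carry metric information. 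The Behrstock--Neumann flexibility moves are engineered to match fibers and boundary patterns of vertex spaces; it is not at all clear that they can be made to respect this additional horizontal pattern, and no ``relative'' combination lemma in the literature does this for you. Second, your reduction step is also not in place: to produce a quasi-isometry of pairs you do not need a pattern-preserving (orbit-to-orbit) map --- by Corollary~\ref{cor: geometric relative quasi to group relative quasi} it suffices to carry a single lift $\widetilde{S}$ to within finite Hausdorff distance of a single lift $\widetilde{S'}$ --- while the equivalence you assert between quasi-isometry of pairs and pattern-preserving quasi-isometry of universal covers is itself unproved and is not delivered by ``coarse separation of cosets'' coming from quadratic distortion. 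In short, your outline points at a reasonable circle of ideas, but the two steps you call the technical heart are precisely the open content of Conjecture~\ref{conj}, so the proposal does not constitute a proof.
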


Although this paper deals only with graph manifolds, it is interesting to work on other classes of $3$--manifolds. We note that if $\Gamma \le \Isom(\HH^n)$ ($n \ge 3$) is a nonuniform lattice and $\Gamma$ is torsion-free then the quotient space $N = \HH^{n}/ \Gamma$ is a hyperbolic manifold of finite volume.
In the setting of non-uniform lattices in $\Isom(\HH^n)$ ($n \ge 3$), quasi-isometry of pairs can be intepreted via algebraic properties of groups (see Theorem~\ref{thm:uniformlatticequasi})
thank to Schwartz Rigidity Theorem.
\begin{thm}
\label{thm:uniformlatticequasi}
Let $\Gamma$ and $\Gamma'$ be two non-uniform lattices of $\Isom(\HH^{n})$ with $n \ge 3$. Let $H$ and $H'$ be two finitely generated subgroups of $\Gamma$ and $\Gamma'$ respectively. Then $(\Gamma,H)$ and $(\Gamma',H')$ are quasi-isometric  if and only if there exists $g \in \Isom(\HH^{n})$ such that $gHg^{-1}$ and $H'$ are commensurable  as well as $g\Gamma g^{-1}$ and  $\Gamma'$ are commensurable.
\end{thm}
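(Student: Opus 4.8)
The plan is to prove both directions by combining Schwartz's rigidity theorem for nonuniform lattices with the basic interaction between quasi-isometries of ambient groups and the subgroups they carry along.

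\textbf{The reverse direction.} Suppose $g \in \Isom(\HH^n)$ conjugates $\Gamma$ to a lattice commensurable with $\Gamma'$ and $H$ to a subgroup commensurable with $H'$. First I would reduce to the case $g = \id$ by noting that conjugation by $g$ is a bijective isometry of $\HH^n$ and hence restricts to a quasi-isometry of any lattice, carrying the pair $(\Gamma, H)$ to the pair $(g\Gamma g^{-1}, gHg^{-1})$ with the subgroup going exactly onto its image (Hausdorff distance zero). So I may assume $\Gamma$ and $\Gamma'$ are commensurable and $H$ and $H'$ are commensurable. Now let $\Lambda = \Gamma \cap \Gamma'$, which has finite index in each; the inclusions $\Lambda \inclusion \Gamma$ and $\Lambda \inclusion \Gamma'$ are quasi-isometries, so composing gives a quasi-isometry $\Gamma \to \Gamma'$ which is (coarsely) the identity on the common supergroup, and therefore maps the subset $H \cap \Lambda$ to itself. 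Since $H \cap \Lambda$ has finite index in $H$ it lies at finite Hausdorff distance from $H$ inside $\Gamma$, and similarly $H' \cap \Lambda$ — which is commensurable with $H \cap \Lambda$, hence at finite Hausdorff distance from it — lies at finite Hausdorff distance from $H'$ in $\Gamma'$. Chasing these finite-distance comparisons through the quasi-isometry shows $(\Gamma, H)$ and $(\Gamma', H')$ are quasi-isometric as pairs. The only mild subtlety is checking that commensurable subgroups of a finitely generated group are at finite Hausdorff distance; this follows because a finite-index subgroup is coarsely dense in the group it sits in, applied twice through the common finite-index subgroup.

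\textbf{The forward direction.} Suppose $\phi \colon \Gamma \to \Gamma'$ is a quasi-isometry with $\phi(H)$ at finite Hausdorff distance from $H'$. By the Schwartz rigidity theorem, since $\Gamma, \Gamma'$ are nonuniform lattices in $\Isom(\HH^n)$ with $n \ge 3$, there is an element $g \in \Isom(\HH^n)$ such that $g\Gamma g^{-1}$ is commensurable with $\Gamma'$ and, moreover, $\phi$ is at finite distance from the map induced by conjugation by $g$ followed by the (coarse) inclusion into the common commensurator — more precisely, $\phi$ agrees, up to bounded error, with the composite $\Gamma \xrightarrow{\;c_g\;} g\Gamma g^{-1} \hookrightarrow \Gamma'$ interpreted coarsely via a common finite-index subgroup. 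Replacing $\Gamma$ by $g \Gamma g^{-1}$ and $H$ by $gHg^{-1}$ (which changes nothing about commensurability classes), I may assume $\phi$ is within finite distance of the coarse identity between the commensurable lattices $\Gamma$ and $\Gamma'$. Then $\phi(H)$ being at finite Hausdorff distance from $H'$ translates, after tracking the bounded errors, into: $H$ and $H'$ are at finite Hausdorff distance from each other inside the group $\gen{\Gamma, \Gamma'} \le \Isom(\HH^n)$ (which is itself a nonuniform lattice, being commensurable with both). The final step is to upgrade "finitely generated subgroups at finite Hausdorff distance" to "commensurable." For this I would use that $H$ and $H'$ are both finitely generated, hence undistorted-enough subgroups whose limit sets in $\partial \HH^n$ coincide (finite Hausdorff distance of the subgroups forces equality of limit sets), and then invoke that two finitely generated subgroups of a lattice with the same limit set, one contained in a bounded neighborhood of the other, are commensurable — this is where one uses either a direct argument via the structure of the lattice or a malnormality/algebraic-closure property. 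Undoing the conjugation by $g$ gives the stated conclusion.

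\textbf{Main obstacle.} The delicate point is the very last step of the forward direction: passing from "finite Hausdorff distance as subsets of the lattice" to "algebraically commensurable." Finite Hausdorff distance is a purely coarse condition and a priori retains no algebraic information, so one genuinely needs a rigidity input. The cleanest route is: $H$ and $H'$ at finite Hausdorff distance inside $\Gamma$ (a quasi-isometrically embedded copy of a lattice acting on $\HH^n$) implies their limit sets in the boundary sphere are equal; for finitely generated subgroups of discrete isometry groups of $\HH^n$, equal limit sets plus one being coarsely contained in the other forces $H \cap H'$ to have finite index in both (via, e.g., the fact that $H$ acts cocompactly on the coarse hull of its limit set within the orbit, and $H'$ does likewise on the same hull, so $H \cap H'$ already acts cocompactly there and is therefore finite-index in each). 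I expect formalizing this "equal limit set $\Rightarrow$ commensurable" implication — and making sure it applies to arbitrary finitely generated (not necessarily geometrically finite) subgroups, or else restricting the statement appropriately — to be the part requiring the most care, whereas the reverse direction and the invocation of Schwartz rigidity are comparatively routine.
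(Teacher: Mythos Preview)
Your overall architecture is correct and matches the paper: Schwartz rigidity handles the ambient lattices, and then one has to pass between ``finite Hausdorff distance'' and ``commensurable'' at the level of the subgroups. The reverse direction you sketch is fine and essentially the same as the paper's.

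The issue is your forward direction. You correctly reduce (via Schwartz) to showing that two subgroups $gHg^{-1}$ and $H'$ at finite Hausdorff distance inside a finitely generated group must be commensurable, and you flag this as the ``main obstacle.'' You then propose a geometric attack via limit sets in $\partial\HH^n$, while admitting you are not sure it goes through for arbitrary finitely generated (possibly geometrically infinite) subgroups. That worry is well-founded: the limit-set route is both the wrong tool here and genuinely delicate in that generality.

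What you are missing is that this step is purely algebraic and requires no hyperbolic geometry at all. In any finitely generated group $G$, two subgroups $H$ and $K$ are commensurable if and only if they are at finite Hausdorff distance in the word metric. This is exactly the lemma the paper invokes (stated there as Lemma~\ref{lem:comm}, citing Mosher--Sageev--Whyte). The elementary argument: if $d_{\mathcal{H}}(H,K)\le R$ then $H$ is contained in finitely many right cosets $Kg_1,\dots,Kg_m$ (since balls in $G$ are finite); each nonempty $H\cap Kg_i$ is a single right coset of $H\cap K$ in $H$, so $[H:H\cap K]\le m$, and symmetrically for $K$. Once you have this lemma, the forward direction is a one-line triangle-inequality computation on Hausdorff distances, exactly as the paper does --- no limit sets, no geometric finiteness hypotheses, no cocompact-on-hull argument.

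So: drop the limit-set paragraph entirely and replace it with the elementary commensurability lemma. With that change your proof is complete and coincides with the paper's.
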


%%%%%%%%%%%%%%%%%%%%%%%%%%%%%%%%%%%%%%%%%%%%%%%%

\subsection*{Acknowledgments}
I would like to thank my adviser Chris Hruska for all his
help and advice throughout this paper, and many thank to Walter Neumann,  Hung Cong Tran and Prayagdeep Parija for useful discussion.

%%%%%%%%%%%%%%%%%%%%%%%%%%%%%%%%%%%%%%%%%%%%%%%%
\section{Quasi-isometry of the pairs of spaces}
In this section, we review some notions in geometric group theory.

Let $(X,d)$ be a metric space, and $\gamma$ a path in $X$. We denote the length of $\gamma$ by $\abs{\gamma}$.
If $A$ and $B$ are subsets of $X$, the \emph{Hausdorff distance} between $A$ and $B$ is
\[
d_{\mathcal{H}}(A,B) = \inf \bigset{r}{A \subseteq \mathcal{N}_{r}(B) \, \text{and $B \subseteq \mathcal{N}_{r}(A)$}}
\] where $\mathcal{N}_{r}(C)$ denotes the $r$--neighborhood of a subset $C$.

\begin{defn}[Quasi--isometry] 
Let $(X_{1},d_{1})$ and $(X_{2},d_{2})$ be metric spaces. A (not necessarily continuous) map $f \colon X_{1} \to X_{2}$ is an \emph{$(L,C)$--quasi-isometric embedding} if there exist constants $L\geq 1$ and $C\geq 0$ such that for all $x,y \in X_{1}$ we have
\[
   \frac{1}{L} \, d_{1}(x,y) - C
   \leq d_{2} \bigl( f(x),f(y) \bigr)
   \leq L\,d_1(x,y) + C.
\]
If, in addition, there exits a constant $D\geq{0}$ such that every point of $X_{2}$ lies in the $D$--neighborhood of the image of $f$, then $f$ is an \emph{$(L,C)$--quasi-isometry}. When such a map exists, $X_{1}$ and $X_{2}$ are \emph{quasi-isometric}.
\end{defn}

\begin{defn}
\label{defn:quasipairs}
Let $X$ and $Y$ be metric spaces. Let $A$ be a subspace of $X$ and $B$ a subspace of $Y$. Two pairs of spaces $(X,A)$ and $(Y,B)$ is called \emph{quasi-isometric} if there exists an $(L,C)$--quasi-isometry $f \colon X \to Y$ such that $f(A) \subseteq B$ and $B \subseteq \mathcal{N}_{C}(f(A))$. We call the map $f$ an $(L,C)$--\emph{quasi-isometry of pairs}. We denote $(X,A) \sim (Y,B)$ if $(X,A)$ and $(Y,B)$ are quasi-isometric, and $(X,A) \nsim (Y,B)$ otherwise.
\end{defn}

\begin{defn}
Let $G$ and $G'$ be finitely generated groups with finite generating sets $\mathcal{S}$ and $\mathcal{S'}$ respectively. Let $\Gamma(G,\mathcal{S})$ and $\Gamma(G',\mathcal{S'})$ be the Cayley graphs of $(G,\mathcal{S})$ and $(G',\mathcal{S'})$ respectively.  Let $H$ be a subgroups of $G$ and $H'$ a subgroup of $G'$. We say $(G,H)$ and $(G',H')$ are quasi-isometric if the pairs of spaces $\bigr (\Gamma(G,\mathcal{S}), H \bigr )$ and $\bigr (\Gamma(G',\mathcal{S'}), H' \bigr )$ are quasi-isometric.
\end{defn}

\begin{rem}
\label{rem:simple}
\begin{enumerate}
\item We note that $\sim$ is an equivalent relation, and
the relation $(G,H) \sim (G',H')$ is independent of choices of generating sets for the groups.
\item  If there is a quasi-isometry $f \colon X \to Y$ such that $d_{\mathcal{H}}\bigl (f(A),B \bigr )$ is finite  then $(X,A)$ and $(Y,B)$ are quasi-isometric.
\end{enumerate}
\end{rem}

\begin{exmp}
\label{exmp:mainexample}
\begin{enumerate}
    \item Let $H \le G_1$ be finitely generated subgroups of a finitely generated group $G$. Suppose that $G_1$ is a finite index subgroup in $G$. Then $(G,H) \sim (G_1,H)$.
    \item
    \label{item:finite} $(G,H) \sim (G,G)$ if and only if $H$ is a finite index subgroup of $G$.
    \item Let $F_{n}$ be the free group on $n$ generators with $n\ge2$. It is well-known that there are two isomorphic subgroups $H$ and $K$ of $F_{n}$ such that $H$ is a finite index subgroup of $F_{n}$ and $K$ is an infinite index subgroup of $F_{n}$. It follows from (\ref{item:finite}) that $(F_{n},H) \nsim (F_{n},K)$.
   \item \label{item:Seifertquasi} Let $M_1$ and $M_2$ be Seifert fibered spaces with the base surfaces have negative Euler characteristic and nonempty boundary. Let $S_1$ and $S_2$ be two properly immersed $\pi_1$--injective horizontal surfaces in $M_1$ and $M_2$ respectively. The immersion $S_{j} \looparrowright M_{j}$ lifts to an embedding in a finite cover $S_{j} \times S^1$ of $M_{j}$ (see Lemma~2.1 \cite{RW98}). Since $\pi_1(S_j)$ is free, there exists a quasi-isometry $\pi_1(S_1) \to \pi_1(S_2)$. It follows easily that  $\bigl (\pi_1(S_{1} \times S^1 ) , \pi_1(S_1) \bigr ) \sim \bigl (\pi_1(S_{2} \times S^1 ) , \pi_1(S_2) \bigr )$. As a consequence
    $\bigl ( \pi_1(M_1), \pi_1(S_1) \bigr ) \sim \bigl (\pi_1(M_2), \pi_1(S_2) \bigr )$.
\end{enumerate}
\end{exmp}

\begin{defn}[Commensurable]
Let $G$ be a group. Two subgroups $H$ and $K$ of $G$ are called \emph{commensurable} if $H \cap K$ is a finite index subgroup of both $H$ and $K$.
\end{defn}

We use the following lemma in the proof of Theorem~\ref{thm:uniformlatticequasi}.
\begin{lem}[Corollary~2.4 \cite{MSW11}]
\label{lem:comm}
Two subgroups $H$ and $K$ are commensurable in a finitely generated group $G$ if and only if $H$ is within a finite Hausdorff distance with $K$.
\end{lem}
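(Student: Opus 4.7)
My plan is to prove both directions by direct coset arguments in the Cayley graph of $G$ with respect to a fixed finite generating set $\mathcal{S}$.

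For the ``commensurable implies finite Hausdorff distance'' direction, I would start by writing $H = \bigsqcup_{i=1}^{n} (H \cap K)\, h_{i}$ as a finite disjoint union of cosets. For any $g \in H \cap K$, the element $g h_{i}$ is at word distance $|h_{i}|$ from $g$, so the coset $(H \cap K) h_{i}$ is contained in the $|h_{i}|$--neighborhood of $H \cap K$. Taking $D = \max_{i} |h_{i}|$ yields $H \subseteq \bignbd{H \cap K}{D} \subseteq \bignbd{K}{D}$, and the symmetric argument on $K$ gives a constant $D'$ with $K \subseteq \bignbd{H}{D'}$; together these bound $d_{\mathcal{H}}(H,K)$ by $\max(D,D')$.

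For the converse, I would assume $d_{\mathcal{H}}(H,K) \le D$. The essential input is that the ball $B = \ball{e}{D}$ in the word metric on $G$ is finite, which is exactly where the finite generation of $G$ enters. For each $h \in H$ I would choose some $k_{h} \in K$ with $d(h, k_{h}) \le D$ and define a map $\phi \colon H \to B$ by $\phi(h) = h^{-1} k_{h}$. The key computation is that if $\phi(h) = \phi(h')$ for some $h, h' \in H$, then
\[
h' h^{-1} \;=\; k_{h'} k_{h}^{-1} \;\in\; H \cap K,
\]
so $h$ and $h'$ lie in the same right coset of $H \cap K$ in $H$. This forces $[H : H \cap K] \le |B| < \infty$, and swapping the roles of $H$ and $K$ gives $[K : H \cap K] < \infty$, so $H$ and $K$ are commensurable.

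The main place to be careful is in the converse: the map $\phi$ depends on non-canonical choices of $k_{h}$, so I need to verify that it nevertheless separates cosets of $H \cap K$, which is exactly what the displayed identity above delivers. I do not anticipate any genuine obstacle beyond keeping track of left versus right multiplication in the coset argument.
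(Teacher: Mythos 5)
Your proof is correct, and both directions are cleanly executed; the coset identity $h'h^{-1}=k_{h'}k_{h}^{-1}\in H\cap K$ is exactly the right way to see that the map $\phi$ separates cosets of $H\cap K$, and the use of finite generation to guarantee finiteness of $\ball{e}{D}$ is where it should be. Note that the paper itself does not reprove this lemma but cites it directly as Corollary~2.4 of \cite{MSW11}; your argument is the standard elementary one that reference uses.
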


\begin{proof}[Proof of Theorem~\ref{thm:uniformlatticequasi}]
We are going to prove sufficiency. Let equip $\Gamma$ and $\Gamma'$ with word metrics.
Let $f \colon \Gamma \to \Gamma'$ be a quasi-isometry such that $f(H)$ is within a finite Hausdorff distance with $H'$ with respect to the word metric on $\Gamma'$. By Schwartz Rigidity Theorem \cite{Schwartz95} (see also, for example, Theorem~24.1 \cite{DK18}), there exists $g \in \Isom(\HH^{n})$ such that the following holds:
\begin{enumerate}
    \item $g \Gamma g^{-1}$ and $\Gamma'$ are commensurable.
    \item Let $G$ be the subgroup of $\Isom(\HH^n)$ generated by two subgroups $g\Gamma g^{-1}$ and $\Gamma'$. We note that $G$ is a finitely generated subgroup. We equip $G$ with a word metric. For each $\gamma \in \Gamma$, choose $y_{\gamma} \in \Gamma'$ which is nearest to $g \gamma g^{-1}$ with respect to the word metric on $G$. Then the map $q_{g} \colon \Gamma \to \Gamma'$ which sends $\gamma$ to $y_{\gamma}$ is a quasi-isometry and $q_{g}$ is within finite distance from $f$.
\end{enumerate}
We will need to show $gHg^{-1}$ and $H'$ are commensurable. Let $d_{\mathcal{H}}$ denote for the Hausdorff distance of any two subsets of $G$ with respect to the given word metric on $G$. By the definition of $q_{g}$ we have $d_{\mathcal{H}}(gHg^{-1}, q_{g}(H)) < \infty$ and $d_{\mathcal{H}}(q_{g}(H),f(H)) < \infty$. By the assumption of $f$ we have $d_{\mathcal{H}}(f(H),H') < \infty$. We use the triangle inequality to get that
\[
d_{\mathcal{H}}(gHg^{-1},H') \le d_{\mathcal{H}}(gHg^{-1},q_{g}(H)) + d_{\mathcal{H}}(q_{g}(H),f(H)) + d_{\mathcal{H}}(f(H),H') < \infty
.\] Thus, $gHg^{-1}$ and $H'$ are commensurable by Lemma~\ref{lem:comm}. 

We now are going to prove necessity. Suppose that there exists $g \in \Isom(\HH^n)$ such that $gHg^{-1}$ and $H'$ are commensurable as well as $g\Gamma g^{-1}$ and $\Gamma'$ are commensurable. Let $G$ be the subgroup of $\Isom(\HH^n)$ generated by two subgroups $g\Gamma g^{-1}$ and $\Gamma'$. We equip $G$ with a word metric $d$, and with respect to this metric we denote $d_{\mathcal{H}}$ the Hausdorff distance of any two subsets of $G$. Since  $gHg^{-1}$ and $H'$ are commensurable as well as $g\Gamma g^{-1}$ and $\Gamma'$ are commensurable, it follows that there is $R >0$ such that $d_{\mathcal{H}}(gHg^{-1},H') \le R$ and $d_{\mathcal{H}}(\Gamma',g \Gamma g^{-1}) \le R$. For each $\gamma \in \Gamma$, choose  an element $q_{g}(\gamma)$ in $\Gamma'$ such that $d(q_{g}(\gamma),g\gamma g^{-1}) \le R$. We thus define the map $q_{g} \colon \Gamma \to \Gamma'$. Since the map $\Gamma \to g\Gamma g^{-1}$ which sends $\gamma$ to $g\gamma g^{-1}$ is a quasi-isometry and $q_{g}$ is within finite distance from the map $\Gamma \to g\Gamma g^{-1}$,  it follows that $q_{g}$ is a quasi-isometry. From the definition of $q_{g}$, it is obvious that $d_{\mathcal{H}}(q_{g}(H),gHg^{-1}) \le R$. We use the facts $d_{\mathcal{H}}(q_{g}(H),gHg^{-1}) \le R$ and $d_{\mathcal{H}}(gHg^{-1},H') \le R$ to get that
\[
d_{\mathcal{H}}(q_{g}(H),H') \le d_{\mathcal{H}}(q_{g}(H),gHg^{-1}) + d_{\mathcal{H}}(gHg^{-1},H') \le 2R
\]
Thus $(\Gamma,H)$ and $(\Gamma,H')$ are quasi-isometric via the map $q_{g}$.
\end{proof}

We use the following lemma in the proof of Theorem~\ref{thm:main}.
\begin{lem}
\label{lem:invariant of length}
Let $G$ and $G'$ be finitely generated groups with finite generating sets $\mathcal{S}$ and $\mathcal{S}'$ respectively. Let $H$ and $H'$ be finitely generated subgroups of $G$ and $G'$ with finite generating sets $\mathcal{A}$ and $\mathcal{A}'$ respectively such that $\mathcal{A} \subseteq \mathcal{S}$ and $\mathcal{A}' \subseteq \mathcal{S}'$. Let $\varphi \colon \bigl (\Gamma(G,\mathcal{S}),H \bigr ) \to \bigl (\Gamma(G',\mathcal{S}'),H' \bigr )$ be an $(L,C)$--quasi-isometry of pairs. Then there exists a constant $L'$ such that
\[
\frac{\abs{h}_{\mathcal{A}}}{L'} - L' \le \abs{\varphi(h)}_{\mathcal{A'}} \le L'\abs{h}_{\mathcal{A}} + L'
\] for all $h \in H$.
\end{lem}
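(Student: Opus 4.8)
The plan is to exploit the fact that, because $\mathcal{A} \subseteq \mathcal{S}$, the inclusion $\Gamma(H,\mathcal{A}) \to \Gamma(G,\mathcal{S})$ does not increase distances, so $\abs{h}_{\mathcal{S}} \le \abs{h}_{\mathcal{A}}$ for every $h \in H$; similarly $\abs{h'}_{\mathcal{S}'} \le \abs{h'}_{\mathcal{A}'}$ for every $h' \in H'$. Combined with the quasi-isometry inequalities for $\varphi$ (applied to the pair $h$ and the identity $e \in H$, noting $\varphi$ is a quasi-isometry $G \to G'$ so $\abs{\varphi(h)}_{\mathcal{S}'} \asymp \abs{h}_{\mathcal{S}}$ up to an additive error coming from $d_{\mathcal{S}'}(\varphi(e),e)$), this immediately gives one of the two desired inequalities, namely the upper bound $\abs{\varphi(h)}_{\mathcal{A}'} \le L'\abs{h}_{\mathcal{A}} + L'$ — but only after we establish the reverse comparison between $\abs{\cdot}_{\mathcal{S}'}$ and $\abs{\cdot}_{\mathcal{A}'}$ on $H'$, which is the genuine content. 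Wait: actually the upper bound on $\abs{\varphi(h)}_{\mathcal{A}'}$ requires bounding the $\mathcal{A}'$-length of $\varphi(h)$, and $\varphi(h) \in H'$ since $\varphi$ is a quasi-isometry \emph{of pairs}, so $\varphi(H) \subseteq H'$.

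First I would set up the key sublemma: there is a constant $K$ (depending only on $H \le G$, $H' \le G'$, the generating sets, and $L,C$) such that for all $h \in H$ one has $\abs{\varphi(h)}_{\mathcal{A}'} \le K\,\abs{h}_{\mathcal{A}} + K$. This is the "easy direction": write $h = a_1 \cdots a_n$ with $a_i \in \mathcal{A}^{\pm 1}$ and $n = \abs{h}_{\mathcal{A}}$; the path $e, a_1, a_1 a_2, \dots, h$ lies in $H$, so its $\varphi$-image is a sequence of points in $H'$ in which consecutive points are at $\mathcal{S}'$-distance at most $L \cdot 1 + C$; connecting consecutive points inside $\Gamma(G',\mathcal{S}')$ and then projecting — this is where one needs a uniform "closest-point into $H'$" retraction — one sees each consecutive pair can be joined in $H'$ by a word of $\mathcal{A}'$-length bounded by a function of $L+C$ alone. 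That uniform bound is exactly the point where the hypothesis that $\varphi$ is a quasi-isometry of pairs (not merely a quasi-isometry) gets used, via the $\mathcal{N}_C$-coarse-surjectivity of $\varphi|_H$ onto $H'$: two points of $H'$ within $\mathcal{S}'$-distance $L+C$ of each other are within $\mathcal{S}'$-distance $L+C+2C$ of $\varphi$-images of two points of $H$ at $\mathcal{A}$-distance bounded in terms of $L,C$, and one chases the constants.

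Next, for the lower bound $\abs{h}_{\mathcal{A}}/L' - L' \le \abs{\varphi(h)}_{\mathcal{A}'}$, I would apply the same sublemma to a coarse inverse $\bar\varphi$ of $\varphi$. A quasi-isometry of pairs has a quasi-inverse which is again a quasi-isometry of pairs $\bigl(\Gamma(G',\mathcal{S}'),H'\bigr) \to \bigl(\Gamma(G,\mathcal{S}),H\bigr)$ (standard, plus one checks $\bar\varphi(H') \subseteq \mathcal{N}(H)$ and coarse-surjects onto $H$); so the sublemma gives a constant $K''$ with $\abs{\bar\varphi(h')}_{\mathcal{A}} \le K''\abs{h'}_{\mathcal{A}'} + K''$ for $h' \in H'$. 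Taking $h' = \varphi(h)$ and using $d_{\mathcal{A}}\bigl(\bar\varphi(\varphi(h)), h\bigr)$ is bounded (since $\bar\varphi \circ \varphi$ is within bounded $\mathcal{S}$-distance of $\mathrm{id}$ and both points lie in $H$, so by the sublemma direction applied to the identity pair, or directly by a connectivity argument in $H$, the $\mathcal{A}$-distance is also bounded), we get $\abs{h}_{\mathcal{A}} \le \abs{\bar\varphi(\varphi(h))}_{\mathcal{A}} + (\text{const}) \le K''\abs{\varphi(h)}_{\mathcal{A}'} + (\text{const})$, which rearranges to the lower bound. Finally I would take $L'$ to be the maximum of all the constants produced.

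The main obstacle is the sublemma's uniform bound: turning "$\varphi$-images of consecutive vertices are $\mathcal{S}'$-close in $G'$" into "they are $\mathcal{A}'$-close in $H'$", uniformly in $h$. The naive inclusion $\Gamma(H',\mathcal{A}') \to \Gamma(G',\mathcal{S}')$ is $1$-Lipschitz in the wrong direction — $H'$ can be badly distorted in $G'$ — so two points of $H'$ close in $G'$ need not be close in $H'$ at all. The resolution is that we are not comparing \emph{arbitrary} nearby points of $H'$: the coarse surjectivity of $\varphi|_H \colon H \to H'$ (part of being a quasi-isometry of pairs) lets us pull the pair back to two points of $H$ that are automatically close in $\Gamma(H,\mathcal{A})$ because they are $\varphi$-preimages of $\mathcal{S}'$-close points and $\varphi$ is a quasi-isometric embedding from $\Gamma(G,\mathcal{S})$ — no wait, that only controls their $\mathcal{S}$-distance, not $\mathcal{A}$-distance. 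So one must instead argue directly: fix once and for all a finite set $T \subseteq H'$ of "transition words" realizing, for each pair $(s, s') \in \mathcal{S}'^{\pm 1} \times \varphi(\mathcal{A}^{\pm 1})$-type bounded-length configurations that actually arise, a connecting $\mathcal{A}'$-word — but such $T$ need not be finite a priori. The honest fix, which I expect the author uses, is to bound $\abs{\varphi(h)}_{\mathcal{A}'}$ not vertex-by-vertex but by choosing for each $h$ a \emph{single} word $w$ in $\mathcal{A}^{\pm 1}$ spelling $h$, mapping the whole $H$-geodesic, and using that $\varphi(h)$ lies within bounded $\mathcal{S}'$-distance of $\varphi(H)$, hence — since $\varphi|_H$ is coarsely onto $H'$ with respect to $\mathcal{S}'$ and $H'$ is connected in $\mathcal{A}'$ — within bounded $\mathcal{A}'$-distance of a $\varphi$-image; this still requires the local step. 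I will present the connectivity argument in $\Gamma(H',\mathcal{A}')$ carefully, since that is the crux, and keep the constant-tracking terse.
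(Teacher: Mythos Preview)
Your overall architecture matches the paper's: establish one inequality directly by walking along a geodesic in one of the subgroups and controlling the jumps, then obtain the other inequality by applying the same argument to a quasi-inverse $\bar\varphi$ (which is again a quasi-isometry of pairs). The paper happens to prove the lower bound $\abs{h}_{\mathcal{A}} \lesssim \abs{\varphi(h)}_{\mathcal{A}'}$ directly --- it takes an $\mathcal{A}'$--geodesic in $H'$ from $\varphi(e)$ to $\varphi(h)$, uses $H' \subseteq \mathcal{N}_C(\varphi(H))$ to pull each vertex $y_i$ back to some $x_i \in H$, and then bounds $d_{\mathcal{A}}(x_i,x_{i+1})$ --- whereas you attempt the mirror-image upper bound first, pushing an $\mathcal{A}$--geodesic forward. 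That choice is immaterial.

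What is material is the step you yourself flag as the obstacle and then leave unresolved: you need that two elements of $H$ (or of $H'$) which are close in the ambient $\mathcal{S}$--metric (or $\mathcal{S}'$--metric) are \emph{uniformly} close in the intrinsic $\mathcal{A}$--metric (or $\mathcal{A}'$--metric). You worry that ``such $T$ need not be finite a priori'' and that distortion of $H'$ in $G'$ blocks the argument. But $T$ \emph{is} finite, and the resolution is a one-line pigeonhole that has nothing to do with distortion: the Cayley graph $\Gamma(G,\mathcal{S})$ is locally finite, so the ball of any fixed radius about the identity in $G$ contains only finitely many group elements, hence only finitely many elements of $H$; let $R$ be the maximum of their $\mathcal{A}$--lengths. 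Then any $x,x' \in H$ with $d_{\mathcal{S}}(x,x')$ bounded by that radius satisfy $x^{-1}x' \in H$ and $x^{-1}x'$ lies in that finite ball, so $d_{\mathcal{A}}(x,x') = \abs{x^{-1}x'}_{\mathcal{A}} \le R$. This is precisely the paper's line ``Since $G$ is locally finite, it follows that there exists a constant $R$ depending on $L$ and $C$ such that $d_{\mathcal{A}}(x_i,x_{i+1}) \le R$.'' No closest-point projection, no connectivity argument, no control of distortion is required --- just finiteness of balls in a finitely generated group. Once you insert this observation at the point where you got stuck, your proposal goes through verbatim.
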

%%%%%%%%%%%%%%%%%%%%%%%%%%%%%%%%%%%%%%%%%%%%%%% PROOF
\begin{proof}
Let $e$ and $e'$ be the identity elements in the groups $G$ and $G'$. For any $h \in H$, let $\alpha$ be a geodesic in the Cayley graph $\Gamma(H', \mathcal{A'})$ connecting $\varphi(e)$ to $\varphi(h)$. We denote $\varphi(e) = y_0,y_1,\dots,y_{m} = \varphi(h)$ be the sequence of vertices belong to $\alpha$. Since $H' \subset N_{C}(\varphi(H))$, there exists $x_i \in H$ such that $d_{\mathcal{S'}}(\varphi(x_i),y_i) \le C$ with $x_0 =e $ and $x_m = h$. Moreover, we have 
\[
d_{\mathcal{S}}(x_i,x_{i+1}) \le Ld_{\mathcal{S'}}(\varphi(x_i),\varphi(x_{i+1})) +C \le L+C.
\]
Since $G$ is locally finite, it follows that there exists a constant $R$ depending on $L$ and $C$ such that $d_{\mathcal{A}}(x_i,x_{i+1}) \le R$. It is obvious that $\abs{h}_{\mathcal{A}} = d_{\mathcal{A}}(e,h) \le mR$, thus $\abs{h}_{\mathcal{A}}= d_{\mathcal{A}}(e,h) \le R d_{\mathcal{A'}}(\varphi(e),\varphi(h)) \le R\abs{\varphi(e)}_{\mathcal{A'}} + R\abs{\varphi(h)}_{\mathcal{A'}}$. Let $\bar{\varphi}$ be a quasi-inverse of $\varphi$, by a similar argument it is not hard to see that constants $L'$ and $C'$ exist.
\end{proof}
%%%%%%%%%%%%%%%%%%%%%%%%%%%%%%%%%%%%%%%%%%%%%%%%%

It is well known that a group acting properly, cocompactly, and isometrically on a geodesic space is quasi-isometric to the space. The following corollary of this fact allows us to show two pairs of (group, subgroup) are quasi-iometric using the geometric properties of spaces in place of words metrics.

\begin{cor}
\label{cor: geometric relative quasi to group relative quasi}
Let $X_i$ and $Y_i$ be compact geodesic spaces, and let $g\colon{(Y_i,y_i)} \to (X_i,x_i)$ be $\pi_1$--injective with $i = 1,2$. We lift the metrics on $X_i$ and $Y_i$ to geodesic metrics on the universal covers $\tilde{X}_i$ and $\tilde{Y}_i$ respectively. Let $G_i = \pi_1(X_i,x_i)$ and $H_i = g_{*} \bigl( \pi_1(Y_i,y_i) \bigr)$. Then $(\tilde{X}_1,\tilde{Y}_1)$ and $(\tilde{X}_2,\tilde{Y}_2)$ are quasi-isometric if and only if $(G_1,H_1)$ and $(G_2,H_2)$ are quasi-isometric.
\end{cor}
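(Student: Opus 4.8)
The plan is to deduce this from the Švarc–Milnor lemma together with its "relative" refinement: the point is that the pair $(\tilde X_i, \tilde Y_i)$, after a choice of basepoints, is quasi-isometric \emph{as a pair} to $(\Gamma(G_i,\mathcal S_i), H_i)$. First I would fix finite presentations of $X_i$ and $Y_i$ (e.g.\ CW or simplicial structures, using compactness) and choose finite generating sets $\mathcal A_i$ for $H_i=\pi_1(Y_i,y_i)$ and $\mathcal S_i$ for $G_i=\pi_1(X_i,x_i)$; since $g$ is $\pi_1$-injective and $Y_i$ is compact, we may arrange $\mathcal A_i\subseteq\mathcal S_i$ so that Lemma~\ref{lem:invariant of length} is applicable if needed. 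Fix a lift $\tilde x_i\in\tilde X_i$ of $x_i$. The Švarc–Milnor lemma gives a $G_i$-equivariant quasi-isometry $\Phi_i\colon \Gamma(G_i,\mathcal S_i)\to\tilde X_i$ sending the identity to $\tilde x_i$ and sending $G_i$ (the vertex set) onto the orbit $G_i\cdot\tilde x_i$, which is coarsely dense in $\tilde X_i$.

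The key step is to check that $\Phi_i$ carries the subgroup $H_i\le G_i$ to within finite Hausdorff distance of the lift $\tilde Y_i\subseteq\tilde X_i$ through $\tilde x_i$ (choosing the lift of $Y_i$ so that it contains $\tilde x_i$ — possible since $g(y_i)=x_i$, and the lift is $H_i$-invariant because $H_i$ is exactly the image of $\pi_1(Y_i,y_i)$). One inclusion is immediate: $H_i\cdot\tilde x_i\subseteq\tilde Y_i$, so $\Phi_i(H_i)\subseteq\mathcal N_r(\tilde Y_i)$ for the bounded constant $r$ coming from Švarc–Milnor. For the reverse inclusion, note that $\tilde Y_i$ is a connected covering space of the compact space $Y_i$ with deck-and-structure group $H_i$, so $\tilde Y_i$ is $H_i$-cocompact; hence $H_i\cdot\tilde x_i$ is coarsely dense in $\tilde Y_i$, i.e.\ $\tilde Y_i\subseteq\mathcal N_{r'}(H_i\cdot\tilde x_i)\subseteq\mathcal N_{r'+r}(\Phi_i(H_i))$ for some bounded $r'$. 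Thus $d_{\mathcal H}\bigl(\Phi_i(H_i),\tilde Y_i\bigr)<\infty$, so by Remark~\ref{rem:simple}(2) the pair $(\Gamma(G_i,\mathcal S_i),H_i)$ is quasi-isometric to the pair $(\tilde X_i,\tilde Y_i)$.

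With this in hand the corollary is formal. If $(G_1,H_1)\sim(G_2,H_2)$, then by definition $(\Gamma(G_1,\mathcal S_1),H_1)\sim(\Gamma(G_2,\mathcal S_2),H_2)$, and composing with the two pair-quasi-isometries $\Phi_1,\Phi_2^{-1}$ just produced — using transitivity of $\sim$ noted in Remark~\ref{rem:simple}(1) — gives $(\tilde X_1,\tilde Y_1)\sim(\tilde X_2,\tilde Y_2)$. Conversely, the same composition in the other order, using quasi-inverses of $\Phi_1$ and $\Phi_2$, transports a pair-quasi-isometry $(\tilde X_1,\tilde Y_1)\to(\tilde X_2,\tilde Y_2)$ to one $(\Gamma(G_1,\mathcal S_1),H_1)\to(\Gamma(G_2,\mathcal S_2),H_2)$, hence $(G_1,H_1)\sim(G_2,H_2)$.

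The main obstacle is the $H_i$-cocompactness of the intermediate cover $\tilde Y_i$, i.e.\ making precise that the cover of $Y_i$ corresponding to $\pi_1(Y_i,y_i)\to\pi_1(X_i,x_i)$ sitting inside $\tilde X_i$ is the universal cover $\tilde Y_i$ and that the $H_i$-action on it is proper and cocompact. Once one observes that $\pi_1$-injectivity of $g$ makes this covering honestly the universal cover of $Y_i$ with deck group $H_i\cong\pi_1(Y_i,y_i)$, the cocompactness is just compactness of $Y_i$, and everything else is a routine application of Švarc–Milnor and the triangle inequality for Hausdorff distance.
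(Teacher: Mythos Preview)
Your proposal is correct and follows exactly the route the paper intends: the paper states this as an immediate corollary of the \v{S}varc--Milnor lemma (the sentence preceding the corollary) and gives no further proof, so your argument---constructing the orbit maps $\Phi_i$, checking that $H_i\cdot\tilde x_i$ has finite Hausdorff distance from the $H_i$-invariant lift $\tilde Y_i$ via cocompactness, and then invoking transitivity from Remark~\ref{rem:simple}(1)---is precisely the filling-in that the paper leaves to the reader.
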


\section{Graph manifolds and horizontal surfaces}

In this section, we briefly review backgrounds about graph manifolds and horizontal surfaces and give some lemmas which will be used in the proof of Theorem~\ref{thm:main} in Section~\ref{sec:proofthm}. All $3$--manifolds are always assumed to be compact, orientable, irreducible with empty or toroidal boundary. A non-geometric manifold $N$ is called \emph{graph manifold} if all the blocks in its JSJ decomposition are Seifert fibered spaces.

\begin{defn}
A \emph{simple graph manifold} $N$ is a graph manifold with the following properties: Each Seifert block is a trivial circle bundle over an orientable surface of negative Euler characteristic. The intersection numbers of fibers of adjacent Seifert blocks have absolute value 1. It was shown by Kapovich and Leeb that any graph manifold $N$ has a finite cover $\hat{N}$ that is a simple graph manifold \cite{KL98}.
\end{defn}

\begin{defn}
Let $M$ be a Seifert manifold with boundary.
A \emph{horizontal surface} in $M$ is a properly immersed surface $g\colon B\looparrowright M$ where $B$ is a compact surface with boundary and the image $g(B)$ is transverse to the Seifert fibration.
A \emph{horizontal surface} in a graph manifold $N$ is a properly immersed surface $g\colon S\looparrowright{N}$ such that for each Seifert block $M$, the intersection $g(S)\cap{M}$ is a horizontal surface in $M$.
\end{defn}

\begin{defn}[Slope and Spirality]
\label{defn:DilationSlopes}
Let $g \colon (S,s_{0}) \looparrowright (N,x_{0})$ be a horizontal surface in a simple graph manifold $N$. Let $\mathcal{T}$ be the union of tori in the JSJ decomposition  of $N$. We denote the collecton of circles in $g^{-1}(\mathcal{T})$ by $\mathcal{C}_{g}$.
With respect to $\mathcal{T}$, let $\Omega$ be the dual graph of $N$. With respect to $g^{-1}(\mathcal{T})$, let $\Omega_{S}$ be the dual graph of $S$. Let $O(\Omega)$ and $O(\Omega_{S})$  be the set of oriented edges in the graphs $\Omega$ and $\Omega_{S}$ respectively. For each oriented edge $e \in O(\Omega_S)$, the \emph{slope} of $e$, denoted by $\mathbf{sl}(e)$, is defined as follows. The initial vertex (resp, ternimal vertex) of $e$ is corresponding to a component in $S - \mathcal{T}_g$ and this component is mapped into a Seifert block, denoted by $\overleftarrow{M}$ (resp, $\overrightarrow{M}$).
 Let $c$ be the circle in $\mathcal{T}_{g}$ corresponding to the oriented edge $e$.
The image of the circle $g(c)$ in $N$ lies in a JSJ torus $T$ obtained by gluing a boundary torus $\overleftarrow{T}$ of $\overleftarrow{M}$ to a boundary torus $\overrightarrow{T}$ of $\overrightarrow{M}$.
Let $\overleftarrow{\beta}$ and $\overrightarrow{\beta}$ be fibers of $\overleftarrow{M}$ and $\overrightarrow{M}$ in the torus $T$.
The $1$--cycles $[\overleftarrow{\beta}]$ and $[\overrightarrow{\beta}]$ generate the integral homology group $H_1(T) \cong \Z^2$, so there exist integers $a$ and $b$ such that
\[
   \bigl[g(c)\bigr]=a[\overleftarrow{\beta}]+b[\overrightarrow{\beta}]
   \quad \text{in} \quad
   H_1(T).
\]
Since the surface $S$ is horizontal, neither of $a$ and $b$ are equal to $0$. We set \[\mathbf{sl}(e) = |b/a|\]
Since we take the absolute value of $b/a$, we note that the way we define $\mathbf{sl}(e)$ does not really depend on choices of orientations of $c$, $\overleftarrow{\beta}$ and $\overrightarrow{\beta}$. Also, for the opposite oriented edge $-e$ we have $\mathbf{sl}(-e) = |a/b|$. In other words, $\mathbf{sl}(e) \cdot \mathbf{sl}(-e) = 1$.

The \emph{spirality} of $S$ in $N$ is a homomorphism $w \colon \pi_1(S,s_0) \to \Q_{+}^{*}$ defined as follows. Choose $[\gamma] \in \pi_1(S,s_0)$ such that $\gamma$ is transverse to $\mathcal{C}_g$.  
In the trivial case that $\gamma$ is disjoint from the curves of the collection $\mathcal{C}_g$, we set $w(\gamma) = 1$.  Let us assume now that this intersection is nonempty.
Then $\mathcal{C}_g$ subdivides $\gamma$ into a concatenation $\gamma_1\cdots\gamma_m$ with the following properties.
Each path $\gamma_i$ starts on a circle $c_i \in \mathcal{C}_g$ and ends on the circle $c_{i+1}$. The path $\gamma$ determines an oriented cycle $e_1 \cdot e_{2} \cdots e_{m}$ in the graph $\Omega_S$.
We let $w(\gamma)$ be the rational number
$\prod_{i=1}^{m} \mathbf{sl}(e_i))$. We remark that the sequence $\{\mathbf{sl}(e_i)\}$ depends on the choice of curve $\gamma$, but the product $w(\gamma)$ depends only on the homology class of $\gamma$ since $\mathbf{sl}(e) \cdot \mathbf{sl}(-e) = 1$ for each $e \in O(\Omega_S)$. 
\end{defn}

\begin{rem}
\label{rem:main}
\begin{enumerate}
    \item
    \label{item:1}
    The notion of spirality is due to Rubinstein-Wang \cite{RW98}.
    The name ``spirality'' was introduced by Yi Liu \cite{YiLiu2017}. The concept has also been called ``dilation'' by Woodhouse \cite{Woodhouse16} and Hruska-Nguyen \cite{Hruska-Nguyen}. We note that the horizontal $g \colon S \looparrowright N$ is separable (equivalent to virtually embedded) if and only if the spirality of $S$ in $N$ is a trivial homomorphism (see Theorem~2.3 in \cite{RW98}).
\item The \emph{governor} of the horizontal surface $g \colon S \looparrowright N$ is the quantity 
$\epsilon = \epsilon(g) = \max \bigset{\mathbf{sl}(e)}{\textup{$e$ is an oriented edge in $O(\Omega_S)$}}$. From (\ref{item:1}), we note that if $g \colon S \looparrowright N$ is non-separable then $\epsilon >1$.
\end{enumerate}
\end{rem}

In the following, we use the notation $[\alpha] \wedge [\beta]$ to denote the algebraic intersection number of two oriented closed curves $\alpha$ and $\beta$ in a torus $T$ with respect to some choosen orientation on $T$.

Let $g \colon S \looparrowright N$ be a horizontal surface in a simple graph manifold $N$ given by Definition~\ref{defn:DilationSlopes}. We have the following lemma that give us an alternative way to compute slopes.

\begin{lem}
\label{lem:alternativeslope}
For each oriented edge $e$ in $O(\Omega_S)$. Let $\overleftarrow{B}$ and $\overrightarrow{B}$ be the components in $S - g^{-1}(\mathcal{T})$ corresponding to the initial and terminal vertices of $e$. These components $\overleftarrow{B}$ and $\overrightarrow{B}$ are mapped by $g$ into Seifert blocks $\overleftarrow{M}$ and $\overrightarrow{M}$ respectively. Let $c$ be the circle in $g^{-1}(\mathcal{T})$ corresponding to the edge $e$ obtained by gluing a boundary circle $\overleftarrow{c}$ of $\overleftarrow{B}$ to a boundary circle $\overrightarrow{c}$ of $\overrightarrow{B}$. The image $g(c)$ in $N$ lies in a JSJ torus $T$ obtained by gluing a boundary torus $\overleftarrow{T}$ of $\overleftarrow{M}$ to a boundary torus $\overrightarrow{T}$ of $\overrightarrow{M}$.  Let $\overleftarrow{\beta}$ and $\overrightarrow{\beta}$ be fibers of $\overleftarrow{M}$ and $\overrightarrow{M}$ in the torus $T$. Let $\overleftarrow{\alpha}$ and $\overrightarrow{\alpha}$ be oriented simple closed curves in $\overleftarrow{T}$ and $\overrightarrow{T}$ respectively such that $\Bigl |[\overleftarrow{\alpha}] \wedge [\overleftarrow{\beta}] \Bigr | =1$ and $\Bigl | [\overrightarrow{\alpha}] \wedge [\overrightarrow{\beta}] \Bigr | =1$. If $[g(c)] = m[\overleftarrow{\alpha}] + n[\overleftarrow{\beta}]$ and $[g(c)] = m'[\overrightarrow{\alpha}] + n'[\overrightarrow{\beta}]$ for some intergers $m,n,m'$ and $n'$. Then $\mathbf{sl}(e) = \abs{m/m'}$.
\end{lem}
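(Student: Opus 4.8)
The plan is to relate the curves $\overleftarrow{\alpha},\overrightarrow{\alpha}$ to the fibers $\overleftarrow{\beta},\overrightarrow{\beta}$ by comparing both expansions of $[g(c)]$ in the homology $H_1(T)\cong\Z^2$, and then extract $\mathbf{sl}(e)=|b/a|$ from the Definition~\ref{defn:DilationSlopes} coefficients. First I would fix an orientation on $T$ and work throughout in $H_1(T)$. Since $\bigl|[\overleftarrow{\alpha}]\wedge[\overleftarrow{\beta}]\bigr|=1$, the classes $[\overleftarrow{\alpha}],[\overleftarrow{\beta}]$ form a basis of $H_1(T)\cong\Z^2$; likewise $[\overrightarrow{\alpha}],[\overrightarrow{\beta}]$ form a basis. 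In Definition~\ref{defn:DilationSlopes} one writes $[g(c)]=a[\overleftarrow{\beta}]+b[\overrightarrow{\beta}]$ and sets $\mathbf{sl}(e)=|b/a|$; so the task reduces to showing $|b/a|=|m/m'|$, where $[g(c)]=m[\overleftarrow{\alpha}]+n[\overleftarrow{\beta}]=m'[\overrightarrow{\alpha}]+n'[\overrightarrow{\beta}]$.

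The key computation is to pair everything with the fiber classes using the algebraic intersection form, which is antisymmetric and unimodular on $H_1(T)$. Taking the wedge of $[g(c)]=a[\overleftarrow{\beta}]+b[\overrightarrow{\beta}]$ with $[\overleftarrow{\beta}]$ gives $[g(c)]\wedge[\overleftarrow{\beta}]=b\,\bigl([\overrightarrow{\beta}]\wedge[\overleftarrow{\beta}]\bigr)$, and wedging with $[\overrightarrow{\beta}]$ gives $[g(c)]\wedge[\overrightarrow{\beta}]=a\,\bigl([\overleftarrow{\beta}]\wedge[\overrightarrow{\beta}]\bigr)$. Dividing, $b/a=-\bigl([g(c)]\wedge[\overleftarrow{\beta}]\bigr)/\bigl([g(c)]\wedge[\overrightarrow{\beta}]\bigr)$, so $|b/a|=\bigl|[g(c)]\wedge[\overleftarrow{\beta}]\bigr|/\bigl|[g(c)]\wedge[\overrightarrow{\beta}]\bigr|$. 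On the other hand, substituting $[g(c)]=m[\overleftarrow{\alpha}]+n[\overleftarrow{\beta}]$ yields $[g(c)]\wedge[\overleftarrow{\beta}]=m\,\bigl([\overleftarrow{\alpha}]\wedge[\overleftarrow{\beta}]\bigr)$, hence $\bigl|[g(c)]\wedge[\overleftarrow{\beta}]\bigr|=|m|$ by the hypothesis $\bigl|[\overleftarrow{\alpha}]\wedge[\overleftarrow{\beta}]\bigr|=1$; symmetrically, substituting $[g(c)]=m'[\overrightarrow{\alpha}]+n'[\overrightarrow{\beta}]$ gives $\bigl|[g(c)]\wedge[\overrightarrow{\beta}]\bigr|=|m'|$. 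Combining the two, $|b/a|=|m|/|m'|=|m/m'|$, which is the claim.

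I expect the only real subtlety to be bookkeeping about which fiber lives in which boundary torus after the gluing: the identification $T=\overleftarrow{T}\cup\overrightarrow{T}$ means $\overleftarrow{\beta}$ and $\overrightarrow{\beta}$ are both regarded as curves on the single torus $T$, and one must make sure $[\overleftarrow{\beta}]$ and $[\overrightarrow{\beta}]$ are genuinely linearly independent in $H_1(T)$ — this is exactly the horizontality hypothesis (the fibers of adjacent blocks are not parallel on the JSJ torus), and it is also why $a,b\neq0$. One should also note that $m,m'\neq 0$: if $m=0$ then $[g(c)]$ would be a multiple of $[\overleftarrow{\beta}]$, forcing $a=0$, contradicting horizontality; so the ratio $|m/m'|$ makes sense. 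No orientation choices affect the final answer since every quantity appears inside an absolute value, consistent with the orientation-independence already noted for $\mathbf{sl}(e)$ in Definition~\ref{defn:DilationSlopes}.
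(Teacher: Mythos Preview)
Your proof is correct and follows essentially the same approach as the paper: both compute $\bigl|[g(c)]\wedge[\overleftarrow{\beta}]\bigr|$ and $\bigl|[g(c)]\wedge[\overrightarrow{\beta}]\bigr|$ two ways via the intersection form to identify $|b|=|m|$ and $|a|=|m'|$ (the paper uses $\bigl|[\overleftarrow{\beta}]\wedge[\overrightarrow{\beta}]\bigr|=1$ from the simple graph manifold hypothesis to get these equalities individually, whereas you take the ratio so that this factor cancels, but the computation is otherwise identical).
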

\begin{proof}
Let $a$ and $b$ be integers such that $[g(c)] = a[\overleftarrow{\beta}] + b[\overrightarrow{\beta}]$, By the definition of slope, we have $\mathbf{sl}(e) = \abs{b/a}$.  We  use the distributive law and the scalar multiplication law of algebraic intersection together with the facts $[\overleftarrow{\beta}] \wedge [\overleftarrow{\beta}] =0$, $\Bigl |[\overleftarrow{\alpha}] \wedge [\overleftarrow{\beta}] \Bigr | =1$ and $\Bigl |[\overleftarrow{\beta}] \wedge [\overrightarrow{\beta}] \Bigr | =1$ to get that
\[
\Bigl |[g(c)] \wedge [\overleftarrow{\beta}] \Bigr | = \Bigl |m[\overleftarrow{\alpha}]\wedge[\overleftarrow{\beta}] + n[\overleftarrow{\beta}]\wedge[\overleftarrow{\beta}] \Bigr | = \abs{m}\] and \[\Bigl |[g(c)] \wedge [\overleftarrow{\beta}] \Bigr | = \Bigl | a[\overleftarrow{\beta}] \wedge [\overleftarrow{\beta}] + b[\overrightarrow{\beta}] \wedge [\overleftarrow{\beta}] \Bigr | = \abs{b}\] Thus, $\abs{m} = \abs{b}$.

Similarly, we get that
\[\Bigl |[g(c)] \wedge [\overrightarrow{\beta}] \Bigr | = \Bigl |m'[\overrightarrow{\alpha}]\wedge[\overrightarrow{\beta}] + n'[\overrightarrow{\beta}]\wedge[\overrightarrow{\beta}] \Bigr | = \abs{m'}\] and \[\Bigl |[g(c)] \wedge [\overrightarrow{\beta}] \Bigr | = \Bigl | a[\overleftarrow{\beta}] \wedge [\overrightarrow{\beta}] + b[\overrightarrow{\beta}] \wedge [\overrightarrow{\beta}] \Bigr | = \abs{a}\] Thus, $\abs{m'} = \abs{a}$. It follows that $\abs{b/a} = \abs{m/m.}$. Therefore $\mathbf{sl}(e) = \abs{m/m'}$.
\end{proof}

We use the following lemma in the construction of surfaces in Lemma~\ref{lem:existence}.
\begin{lem}[Lemma~2.2 in \cite{RW98}]
\label{lem:RW lemma}
Let $F$ be a surface with non-empty boundary, positive genus and $\chi(F) <0$. Let $M$ be the trivial Seifert fibered space $F \times S^1$. We fix orientations of the surface $F$ and the fiber $S^1$ of $M$. Let $\bigset{\alpha_i}{i =1,2,\dots ,t}$ be the collection of oriented boundary curves of the surface $F$. Let $T_i = T(\alpha_i,\beta_i)$ be the boundary torus of $M = F \times S^1$ where $\beta_i$ is a oriented fiber $S^1$ corresponding to the second factor of $M$ with $i = 1,2,\dots,n$. Suppose that $\bigset{c_{ij}}{j = 1,2}$ is a family of oriented simple closed curves on $T_i$ and 
\[
   \bigl[c_{ij}\bigr] = u_{ij}[\alpha_i] + v_{ij}[\beta_i]
   \quad \text{in} \quad
   H_1(T_i)
\] for some integers $u_{ij}$ and $v_{ij}$ with $u_{ij} >0$.

Then the union of family $\bigset{c_{ij}}{j = 1,2}$ is a boundary of a connected immersed orientable horizontal surface $S$ in $M$ if the following holds
\begin{enumerate}
\item $\sum_{i=1}^{n}\,\sum_{j=1}^{2}v_{ij} = 0$
\item There exists $u>0$ such that for all $i$ we have $u_{i1} + u_{i2} = u $.
\item $u\chi(F)$ is even.
\end{enumerate}
\end{lem}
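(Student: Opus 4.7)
The plan is to realize $S$ as the image of an immersion $\iota \colon \tilde{F} \to F \times S^1$ of the form $\iota(x) = (p(x), \phi(x))$, where $p \colon \tilde{F} \to F$ is a connected $u$--fold covering map and $\phi \colon \tilde{F} \to S^1$ is a continuous map. Horizontality of $S = \iota(\tilde{F})$ is then automatic, because the composition of $\iota$ with the first-factor projection $F \times S^1 \to F$ is exactly $p$, which is a local diffeomorphism and therefore transverse to the $S^1$--fibers.

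First I would construct the cover $p$. Using the standard correspondence between connected $u$--fold covers of $F$ and transitive permutation representations $\rho \colon \pi_1(F) \to S_u$, it suffices to produce such a $\rho$ in which each boundary loop $\alpha_i$ is sent to a permutation of cycle type $(u_{i1}, u_{i2})$; hypothesis~(2) says exactly that $(u_{i1}, u_{i2})$ is a partition of $u$. The preimage $p^{-1}(\alpha_i)$ then consists of two circles $\tilde{\alpha}_{i1}, \tilde{\alpha}_{i2}$ covering $\alpha_i$ with degrees $u_{i1}, u_{i2}$, so $\tilde{F}$ has exactly $2n$ boundary components. A permutation of cycle type $(u_{i1}, u_{i2})$ has sign $(-1)^{u-2} = (-1)^u$, so $\rho(\alpha_1 \cdots \alpha_n)$ has sign $(-1)^{un}$. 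On the other hand, under the surface presentation of $\pi_1(F)$ this product must equal a product of commutators and hence lie in the alternating group $A_u$. The resulting parity constraint $(-1)^{un}=1$ is equivalent to $u\chi(F)$ being even, which is precisely hypothesis~(3). The positive-genus assumption gives commutator generators $a_k, b_k$ of $\pi_1(F)$ with enough slack to choose $\rho$ both transitive and with the desired cycle type on every $\alpha_i$.

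Next I would build $\phi \colon \tilde{F} \to S^1$ by prescribing that $\phi|_{\tilde{\alpha}_{ij}}$ has degree $v_{ij}$. Homotopy classes of maps $\tilde{F} \to S^1$ are classified by $H^1(\tilde{F}; \Z)$, and a prescribed system of boundary winding numbers extends to $\tilde{F}$ if and only if its total vanishes in $H^2(\tilde{F}, \partial \tilde{F}; \Z) \cong \Z$. This is exactly hypothesis~(1), $\sum_{i,j} v_{ij} = 0$.

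Finally, setting $\iota(x) = (p(x), \phi(x))$, the surface $S = \iota(\tilde{F})$ is connected and orientable (inherited from $\tilde{F}$), horizontal by the observation above, and by construction $\iota$ restricted to $\tilde{\alpha}_{ij}$ wraps $u_{ij}$ times around $\alpha_i$ and $v_{ij}$ times around $\beta_i$, producing a simple closed curve on $T_i$ in the homology class $u_{ij}[\alpha_i] + v_{ij}[\beta_i] = [c_{ij}]$. After a small boundary isotopy inside each $T_i$ we can arrange $\partial S = \bigcup_{i,j} c_{ij}$ exactly. The main obstacle, in my view, is Step~1: producing a transitive permutation representation of $\pi_1(F)$ with the prescribed cycle structure on \emph{every} boundary loop simultaneously. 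This combinatorial step is the technical heart of the argument and is precisely where hypotheses~(2), (3), and the positive-genus assumption all come together.
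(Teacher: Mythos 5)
The paper does not prove this lemma itself -- it cites it directly from Rubinstein--Wang -- and your reconstruction is essentially the argument given there: build a connected $u$--fold cover $p \colon \tilde F \to F$ whose boundary circles cover the $\alpha_i$ with multiplicities $u_{i1}, u_{i2}$, then ``tilt'' by a map $\phi \colon \tilde F \to S^1$ with prescribed boundary winding numbers $v_{ij}$, and take $\iota = (p,\phi)$. Your identification of where each hypothesis enters is correct: (2) gives the partitions $(u_{i1},u_{i2})$ of a common degree $u$, (1) is exactly the vanishing of the obstruction in $H^2(\tilde F,\partial\tilde F)\cong\Z$ for extending the boundary winding data, and (3) is the sign/Riemann--Hurwitz constraint $(-1)^{un}=1$, which (since $u\chi(F)\equiv un \pmod 2$) is the condition for $\prod\rho(\alpha_i)$ to land in $A_u$ as a product of commutators must.

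The one place you wave your hands is the one you flag yourself: the existence of a \emph{transitive} representation $\rho\colon\pi_1(F)\to S_u$ realizing the prescribed cycle type on every boundary loop simultaneously. This is genuinely the technical content; the parity condition is necessary but its sufficiency (for positive genus base) is a theorem, not an exercise. The standard reference is the positive-genus case of the Hurwitz realization problem (Husemoller, ``Ramified coverings of Riemann surfaces,'' 1962; or Edmonds--Kulkarni--Stong, 1984), where the genus handles are used to correct both the product relation and the connectivity of the cover. Writing ``this must equal a product of commutators'' is correct but does not by itself produce the required $A_k,B_k$ with $\langle A_k,B_k,\sigma_i\rangle$ transitive; for a self-contained proof one either cites this realizability theorem or carries out the explicit tubing/surgery argument on a possibly disconnected cover. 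With that reference supplied, your proof is complete and is the same construction Rubinstein--Wang give.
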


\begin{rem}
\label{rem:genus}
Let $S \looparrowright M$ be the horizontal surface given by Lemma~\ref{lem:RW lemma}. By the construction, we note that the number of boundary components of $S$ is $2t$. We can compute the genus $x$ of $S$ as the follows. The composition of $S \looparrowright M$ with the projection of $M$ to $F$ yields a finite covering map $S \to F$ with degree $u$. Hence, $\chi(S) = u\chi(F)$. It follows that $2-2x -2t = u\chi(F)$. Thus, $x = \bigl (2 -2t -u\chi(F) \bigr )\bigl /2$.
\end{rem}

Let $\overleftarrow{\alpha}, \overleftarrow{\beta}, \overrightarrow{\alpha}$ and $\overrightarrow{\beta}$ be the copies of the circle $S^1$. Let $\overleftarrow{T} = \overleftarrow{\alpha} \times \overleftarrow{\beta}$ and $\overrightarrow{T} = \overrightarrow{\alpha} \times \overrightarrow{\beta}$. Let $J = \Bigl(\begin{matrix}
p&q \\ r&s
\end{matrix} \Bigr)$ be the $2$ by $2$ matrix such that $p,q,r,s \in \Z$, $q \neq 0$ and $ps - qr = -1$.
With respect to the matrix $J$, the basis $\{[\overleftarrow{\alpha}], [\overleftarrow{\beta}] \}$ in $H_{1}(\overleftarrow{T})$, and the basis $\{[\overrightarrow{\alpha}], [\overrightarrow{\beta}]\}$ in $H_{1}(\overrightarrow{T})$, there is a homeomorphism $h \colon \overleftarrow{T} \to \overrightarrow{T}$ such that $h_{*} \colon H_{1}(\overleftarrow{T}) \to H_{1}(\overrightarrow{T})$ has the matrix $J$ in the sense that 
\[
h_{*} \Bigl( a[\overleftarrow{\alpha}] + b[\overleftarrow{\beta}] \Bigr) = \Bigl ([\overrightarrow{\alpha}], [\overrightarrow{\beta}] \Bigr) \Bigl(\begin{matrix}
p&q \\ r&s
\end{matrix} \Bigr) \Bigl(\begin{matrix}
a \\ b
\end{matrix} \Bigr)
\]
In the rest of this paper, when we say we glue the torus $\overleftarrow{T}$ to the torus $\overrightarrow{T}$ via matrix $J$, we mean that the gluing map is the homeomorphism $h$.

\section{Constructing horizontal surfaces}
\label{sec:construngtingsurfaces}
In this section, we will construct a closed simple graph manifold $N$ and a collection of horizontal surfaces $\{S_{n} \looparrowright N\}$ such that when we pass to a specific subsequence then this subsequence satisfies the conclusion of Theorem~\ref{thm:main}. We also recall some facts from \cite{Hruska-Nguyen} that will be used in Section~\ref{sec:proofthm}.

\begin{lem}
\label{lem:existence}
There exists a closed simple graph manifold $N$ such that for any $n \in \N$, there exists a non-separable, horizontal surface $g_{n} \colon S_{n} \looparrowright N$ with the following properties.
\begin{enumerate}
    \item The governor of $g_{n} \colon S_{n} \looparrowright N$ is $\epsilon_{n} = 2n+1$
    \item Let $\mathcal{T}$ be the union of the JSJ tori of $N$. There exists a simple closed curve $\gamma_{n}$ in $S_{n}$ such that the geometric intersection number of $\gamma_{n}$ and  $g_{n}^{-1}(\mathcal{T})$ is $2$ and $w(\gamma_{n}) = (2n+1)^{2}$. 
\end{enumerate}
\end{lem}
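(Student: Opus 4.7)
The plan is to build the graph manifold $N$ from just two Seifert blocks glued along two pairs of tori, and then for each $n$ to piece together $S_n$ from one horizontal subsurface per block produced by Lemma~\ref{lem:RW lemma}, with boundary slopes engineered to give governor $2n+1$ and a loop of spirality $(2n+1)^2$.

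For the manifold, I would fix a compact orientable surface $F$ of positive genus with exactly two boundary components (so $\chi(F)<0$), set $M_1=M_2=F\times S^1$, and glue each matched pair of boundary tori via the matrix $J=\bigl(\begin{smallmatrix}0&1\\1&0\end{smallmatrix}\bigr)$. Since $|q|=1$ on each gluing, the adjacent fibers intersect with absolute value $1$, so $N$ is a closed simple graph manifold whose dual graph has two vertices and two edges; this $N$ does not depend on $n$.

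For each $n\ge 1$, I would prescribe four oriented simple closed curves on $\boundary M_1$, two on each $\overleftarrow{T}_i$, whose homology classes in the basis $\{[\overleftarrow{\alpha}_i],[\overleftarrow{\beta}_i]\}$ have shape $(u_{ij},v_{ij})\in\{(2n+1,\pm 1),\,(1,\pm(2n+1))\}$, with the $\pm$'s arranged so that the four $v_{ij}$ sum to zero while $u_{i1}+u_{i2}=2n+2$ for both $i$. This satisfies conditions (1) and (2) of Lemma~\ref{lem:RW lemma}, and (3) is automatic because $u=2n+2$ is even. Lemma~\ref{lem:RW lemma} then produces a connected horizontal piece $B_1\looparrowright M_1$ bounded by these curves. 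Because $J$ interchanges $[\overleftarrow{\alpha}]$ and $[\overleftarrow{\beta}]$, pushing each boundary curve across the JSJ torus swaps its $u$- and $v$-coefficients on the $M_2$ side; reversing orientation where needed makes every new $u$-coefficient positive, and a direct check shows that the resulting data on $\boundary M_2$ again satisfies the three hypotheses (the $u$-sums again equal $2n+2$ and the $v$-sum again vanishes). A second application of Lemma~\ref{lem:RW lemma} yields $B_2\looparrowright M_2$, and gluing $B_1$ to $B_2$ along the matched circles produces a connected horizontal surface $g_n\colon S_n\looparrowright N$. By Lemma~\ref{lem:alternativeslope}, the slope of the oriented edge $e_{ij}$ corresponding to $c_{ij}$ equals $|u_{ij}/v_{ij}|$, which is $2n+1$ or $1/(2n+1)$; since both values (and no others) occur in either direction, the governor of $g_n$ is exactly $\epsilon_n=2n+1$.

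For the curve $\gamma_n$, I would select one boundary circle of $B_1$ on $\overleftarrow{T}_1$ whose oriented edge has slope $2n+1$ toward $M_2$ and one on $\overleftarrow{T}_2$ whose oriented edge has slope $1/(2n+1)$ toward $M_2$; I then connect them by a properly embedded arc in $B_1$, and connect the matched circles on $\overrightarrow{T}_1$ and $\overrightarrow{T}_2$ by a properly embedded arc in $B_2$. The concatenation is a simple closed curve in $S_n$ meeting $g_n^{-1}(\mathcal{T})$ in exactly two points. Around this loop, $\Omega_{S_n}$ records a forward edge of slope $2n+1$ followed by the reverse of an edge of slope $1/(2n+1)$ (which has slope $2n+1$ after reversal), so Definition~\ref{defn:DilationSlopes} gives $w(\gamma_n)=(2n+1)(2n+1)=(2n+1)^2$. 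The spirality homomorphism is then non-trivial, so by Remark~\ref{rem:main}(1) the surface $g_n$ is non-separable.

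The main obstacle is the two-sided compatibility with Lemma~\ref{lem:RW lemma}: the eight coefficients $u_{ij},v_{ij}$ must be chosen so that the zero-$v$-sum, equal-$u$-sum, and parity conditions hold on both blocks after $J$ is applied, which is what forces the symmetric pattern above with the particular sign choices on the $v_{ij}$. Once the coefficients are in place, realizing $\gamma_n$ as a simple closed curve is routine, since each $B_i$ is a connected orientable surface with four boundary components and the two required arcs can be taken disjointly embedded.
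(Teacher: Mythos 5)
Your construction is correct, and it reaches the same conclusion by a genuinely different route from the paper.

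The paper builds $N$ from a once-punctured-torus block $\overleftarrow{F}\times S^1$ and a twice-punctured-torus block $\overrightarrow{F}\times S^1$, glues one pair of boundary tori via $J=\bigl(\begin{smallmatrix}1&1\\2&1\end{smallmatrix}\bigr)$, obtains an open horizontal surface $B_n$ in the resulting one-boundary-torus manifold $N'$ via Lemma~\ref{lem:RW lemma}, and then \emph{doubles} $N'$ and $B_n$ along their boundaries to close things up; the loop $\gamma_n$ crosses the single interior JSJ torus of $N'$ twice, once through each of the two preimage circles $c_1,c_2$. Your construction instead closes up immediately: two copies of a genus-one two-boundary block, both pairs of boundary tori glued by $J=\bigl(\begin{smallmatrix}0&1\\1&0\end{smallmatrix}\bigr)$, and $\gamma_n$ crosses two \emph{different} JSJ tori. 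This avoids the doubling step (and the attendant care about merged Seifert blocks on the doubling torus) at the cost of having to verify the three hypotheses of Lemma~\ref{lem:RW lemma} on both blocks simultaneously, which you rightly flag as the main constraint. Your sign pattern does work: taking $(u_{11},v_{11})=(2n+1,1)$, $(u_{12},v_{12})=(1,-(2n+1))$ on $\overleftarrow{T}_1$ and $(u_{21},v_{21})=(1,2n+1)$, $(u_{22},v_{22})=(2n+1,-1)$ on $\overleftarrow{T}_2$ gives $v$-sum zero and $u$-sums $2n+2$; after $J$ swaps coordinates and you reverse the two curves whose first coordinate becomes negative, the $M_2$-side data is $(1,2n+1),(2n+1,-1),(2n+1,1),(1,-(2n+1))$, which again has zero $v$-sum and equal $u$-sums $2n+2$, and $u\chi(F)=(2n+2)(-2g)$ is automatically even. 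Lemma~\ref{lem:alternativeslope} then gives exactly the slopes $\{2n+1,\,1/(2n+1)\}$, so the governor is $2n+1$, and routing $\gamma_n$ through $c_{11}$ and back through $c_{21}$ gives $w(\gamma_n)=(2n+1)\cdot(2n+1)$. So the argument is sound; the only thing a written-up version should add is the explicit choice of the eight coefficients, which pins down the ``direct check'' you allude to.
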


\begin{proof}
We first construct a simple graph manifold $N'$ with non-empty boundary, and a non-serable horiontal surface $B_{n} \looparrowright N'$ (for each $n$) satisfying the conclusion of the lemma (we refer the reader to Figure~1 for an illustration.), and then obtain a closed simple graph manifold $N$ and a closed surface $S_n$ by doubling $N'$ and $B_n$ along their boundaries respectively. The construction here is inspired from Example~2.6 in \cite{RW98}.

Let $\overleftarrow{F}$ be the once punctured torus with the boundary circle denoted by $\overleftarrow{\alpha}$. Let $\overleftarrow{\beta}$ be the fiber factor of the trivial Seifert fibered space $\overleftarrow{F} \times S^1$. We denote the boundary torus of $\overleftarrow{F} \times S^1$ by $\overleftarrow{T_1}$. We fix orientations of $\overleftarrow{F}$ and $\overleftarrow{\beta}$.
 Let $\overrightarrow{F}$ be a twice punctured torus with two boundary circles denoted by $\overrightarrow{\alpha}$ and $\overrightarrow{\alpha'}$.  Let $\overrightarrow{\beta}$ be the fiber factor of the trivial Seifert fibered space $\overrightarrow{F} \times S^1$. The space $\overrightarrow{F} \times S^1$ has two boundary tori $\overrightarrow{T_1} = T(\overrightarrow{\alpha},\overrightarrow{\beta})$ and $\overrightarrow{T_2} = T(\overrightarrow{\alpha'},\overrightarrow{\beta})$. We fix orientations of $\overrightarrow{F}$ and $\overrightarrow{\beta}$.
 
Let $J$ be the matrix $ \Bigl(\begin{matrix}
1&1 \\ 2&1
\end{matrix} \Bigr)$.
Let $N'$ be the simple graph manifold obtained from gluing the boundary torus $\overleftarrow{T_1}$ of $\overleftarrow{F} \times S^1$ to the boundary torus $\overrightarrow{T_1}$ of $\overrightarrow{F} \times S^1$ via the gluing matrix $J$. We note that $N'$ is a simple graph manifold because of $\Bigl |[\overleftarrow{\beta}] \wedge [\overrightarrow{\beta}] \Bigr| =1$. To see this, we note that $[\overleftarrow{\beta}] = 0[\overleftarrow{\alpha}] + [\overleftarrow{\beta}]$ and hence in $H_{1}(\overrightarrow{T}_1)$ we have $[\overleftarrow{\beta}] = \Bigl([\overrightarrow{\alpha}], [\overrightarrow{\beta}] \Bigr)\Bigl(\begin{matrix}
1&1 \\ 2&1
\end{matrix} \Bigr) \Bigl(\begin{matrix}
0 \\ 1
\end{matrix} \Bigr) = [\overrightarrow{\alpha}] + [\overrightarrow{\beta}]$. Thus $\Bigl |[\overleftarrow{\beta}] \wedge [\overrightarrow{\beta}] \Bigr| = \Bigl |[\overrightarrow{\alpha}] \wedge [\overrightarrow{\beta}] \Bigr | =1$.

%For any $\delta >0$, let $n \in \N$ be any number such that $(2n+1)^{2} \ge \delta$.

Let $\overleftarrow{B}$ be the orientable surface with two boundaries $\overleftarrow{c_1}$ and $\overleftarrow{c_2}$ with $n + 1$ genus. Let $\overrightarrow{B}$ be the orientable surface with four boundaries $\overrightarrow{c_1}, \overrightarrow{c_2}, \overrightarrow{c_3}, \overrightarrow{c_4}$ and $2n +1$ genus.
Let $c_{1,1}$ and $c_{1,2}$ be oriented simple closed curves in $\overleftarrow{T_1}$ such that in  $ H_1(\overleftarrow{T_1})$ we have $[c_{1,1}] = [\overleftarrow{\alpha}] + 2n[\overleftarrow{\beta}]$ and $[c_{1,2}] = (2n+1)[\overleftarrow{\alpha}] - 2n[\overleftarrow{\beta}]$. 
Applying Lemma~\ref{lem:RW lemma} to $\overleftarrow{F}$, $\overleftarrow{T_1}$, $c_{1,1}$ and $c_{1,2}$, there is a horizontal surface $\overleftarrow{g} \colon \overleftarrow{B} \looparrowright \overleftarrow{F} \times S^1$ such that in $ H_1(\overleftarrow{T}_1)$ we have
\begin{equation}
\tag{$*$}
\label{eq:blacklozenge}
\begin{split}
\bigl[\overleftarrow{g}(\overleftarrow{c_{1}})\bigr] &= [c_{1,1}] = [\overleftarrow{\alpha}]+2n[\overleftarrow{\beta}]\\
   \bigl[\overleftarrow{g}(\overleftarrow{c_{2}})\bigr] &= [c_{1,2}] = (2n+1)[\overleftarrow{\alpha}] - 2n[\overleftarrow{\beta}]
 \end{split}
\end{equation}
Since the homeomorphism $\overleftarrow{T_1} \to \overrightarrow{T_1}$ is the gluing matrix $J$, it follows that in $H_{1}(\overrightarrow{T})$ we have $[c_{1,1}] = (2n+1)[\overrightarrow{\alpha}] + (2n+2)[\overrightarrow{\beta}]$ and $[c_{1,2}] = [\overrightarrow{\alpha}] + (2n+2)[\overrightarrow{\beta}]$.
Let $c_{2,1}$ and $c_{2,2}$ be oriented simple closed curves in $\overrightarrow{T_2}$ such that in $H_{1}(\overrightarrow{T_2})$ we have $[c_{2,1}] = [\alpha'] -(2n+2)[\overrightarrow{\beta}]$ and $[c_{2,2}] = (2n+1)[\overrightarrow{\alpha'}] - (2n+2)[\overrightarrow{\beta}]$. Applying Lemma~\ref{lem:RW lemma} to $\overrightarrow{F}$, $\overrightarrow{T_1}$, $\overrightarrow{T_2}$, $c_{1,1}, c_{1,2}, c_{2,1}$ and $c_{2,2}$, there is a horizontal surface $\overrightarrow{g} \colon \overrightarrow{B} \looparrowright \overrightarrow{F} \times S^1$ such that in $ H_1(\overrightarrow{T_1})$ we have
\begin{equation}
\tag{$\dagger$}
\label{eq:ddagger}
\begin{split}
    \bigl[\overrightarrow{g}(\overrightarrow{c_{1}})\bigr] &= [c_{1,1}] =  (2n+1)[\overrightarrow{\alpha}]+(2+ 2n)[\overrightarrow{\beta}]\\
    \bigl[\overrightarrow{g}(\overrightarrow{c_{2}})\bigr] &= [c_{1,2}] = [\overrightarrow{\alpha}]+(2n+2)[\overrightarrow{\beta}]
 \end{split}
\end{equation} and in $H_{1}(\overrightarrow{T_2})$ we have
\begin{align*}
       \bigl[\overrightarrow{g}(\overrightarrow{c_{3}})\bigr] &= [c_{2,1}] = [\overrightarrow{\alpha'}] -(2 +2n)[\overrightarrow{\beta}]\\
    \bigl[\overrightarrow{g}(\overrightarrow{c_{4}})\bigr] &= [c_{2,2}] =  (2n+1)[\overrightarrow{\alpha'}] -(2+2n)[\overrightarrow{\beta}]
\end{align*}

Since $\overleftarrow{g}(\overleftarrow{c_j}) = \overrightarrow{g}(\overrightarrow{c_j})$ in $T$ with $j =1,2$, we may paste horizontal surfaces $\overleftarrow{g} \colon \overleftarrow{B} \looparrowright \overleftarrow{F} \times S^1$ and  $\overrightarrow{g} \colon \overrightarrow{B} \looparrowright \overrightarrow{F} \times S^1$ to form a horizontal surface $g \colon B_n \looparrowright N'$ where $B_n$ is formed from $\overleftarrow{B}$ and $\overrightarrow{B}$ by gluing. The surface $B_n$ has two boundary components $\overrightarrow{c_3}$ and $\overrightarrow{c_4}$. We denote $c_1$ to be the closed curve in $B_n$ obtained from gluing $\overleftarrow{c_1}$ to $\overrightarrow{c_1}$. We denote $c_2$ to be is the closed curve in $B_n$ obtained from gluing $\overleftarrow{c_2}$ to $\overrightarrow{c_2}$. %Let $T$ be the JSJ torus in $N'$ obtained by gluing the boundary torus $\overleftarrow{T}$ in $\overleftarrow{M}$ to the boundary torus $\overrightarrow{T}$ in $\overrightarrow{M}$.

\begin{figure}[h]
\label{figuremain}
\centering
\def\svgwidth{\columnwidth}
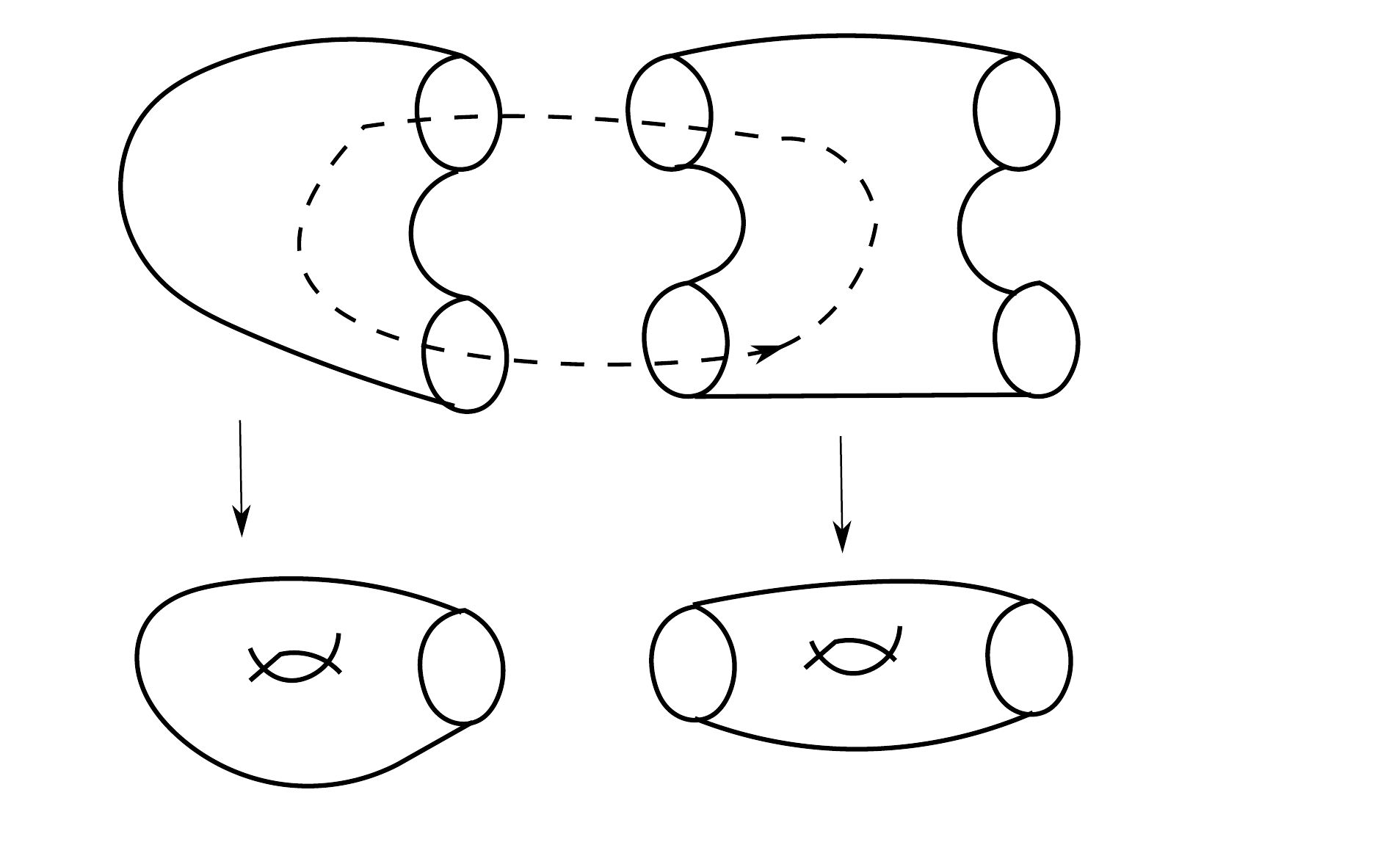
\caption{The left down arrow illustrates the horizontal surface $\protect \overleftarrow{g} \colon \protect \overleftarrow{B} \to \protect \overleftarrow{F} \times {S^{1}}$ and the right down arrow illustrates the horiozntal surface $\protect \overrightarrow{g} \colon \protect \overrightarrow{B} \to  \protect \overrightarrow{F} \times S^1$ in Lemma~\ref{lem:existence}. The simple graph manifold $N'$ is obtained from $\protect \overleftarrow{F} \times S^{1}$ to $\protect \overrightarrow{F} \times S^{1}$ by gluing the boundary torus $\protect \overleftarrow{\alpha} \times S^1$ of $\protect \overleftarrow{F} \times S^1$ to the boundary torus $\protect \overrightarrow{\alpha} \times S^1$ of $\protect \overrightarrow{F} \times S^1$ via the gluing matrix $J$. We paste $\protect \overleftarrow{g}$ and $\protect \overrightarrow{g}$ to form the horizontal surface $g \colon B_{n} \to N'$. The oriented curve $\gamma_n$ in the surface $B_{n}$ is shown in the Figure.}
\end{figure}

Fix a point $x$ in the interior of the subsurface $\overrightarrow{B}$ of $B_n$. Let $\gamma_{n}$ be an oriented simple closed curve in $B_n$ such that starting from $x$ the curve $\gamma_{n}$ intersects each circle $c_1$ and $c_2$ exactly once.
The direction of $\gamma_{n}$ determines directed edges $e_1$ and $e_2$ in the graph $\Omega_{B_n}$.
Applying Lemma~\ref{lem:alternativeslope} to the horizontal surface $g \colon B_n \looparrowright N'$ together with equations (\ref{eq:blacklozenge}) and (\ref{eq:ddagger}), we get that $\mathbf{sl}(e_1) = (2n+1) \bigl/ 1 =2n+1$ and $\mathbf{sl}(e_2) = (2n+1) \bigl/ 1 = 2n+1$.
Therefore, $w(\gamma_n) =  \mathbf{sl}(e_1) \cdot \mathbf{sl}(e_2) = (2n+1)^2 $.

We double the surface $B_n$ along its boundary to get a closed surface, denoted by $S_{n}$. We also double the simple graph manifold $N'$ along its boundary to get a closed simple graph manifold, denoted by $N$. From the horizontal surface $g \colon B_n \looparrowright N'$, after doubling $B_n$ and $N'$ along their respective boundaries, we get a canonical a horizontal surface, denoted by $g_{n} \colon S_{n} \looparrowright N$. Furthermore, since $w(\gamma_n) >1$ it follows that  $g_{n} \colon S_{n} \looparrowright N$ is non-seprable (see Remark~\ref{rem:main}). We note that the manifold $N$ is a closed simple graph manifold with three Seifert blocks and $S_{n}$ is a closed surface with three pieces. We note that  from the construction of $g_{n} \colon S_{n} \looparrowright N$, its governor  is  $ \epsilon_{n} = 2n+1$ and the simple closed curve $\gamma_{n}$ in $S_{n}$ has geometric intersection number with $g_{n}^{-1}(\mathcal{T})$ is $2$ where $\mathcal{T}$ is the union of JSJ tori in $N$.
\end{proof}

To get into the proof of Theorem~\ref{thm:main}, we need several facts from  Section~5 and Section~6 in \cite{Hruska-Nguyen}. Let $\{S_{j} \looparrowright N \}_{j \in \N}$  be the collection of non-separable, horizontal surfaces given by Lemma~\ref{lem:existence}.
We equip $S_{j}$ with a hyperbolic metric, and we equip $N$ with a length metric. These metrics induce metrics on the universal covers $\tilde{S}_j$ and $\tilde{N}$, which are denoted by $d_{\tilde{S}_j}$ and $d$ respectively. In the following, for any two points $x$ and $y$ in $\tilde{N}$ we denote $[x,y]$ as a geodesic in $\tilde{N}$ connecting $x$ to $y$.

Let $S_{n}$ be the surface given by Lemma~\ref{lem:existence}, and let $\gamma_{n}$ be the curve given by Lemma~\ref{lem:existence}. Fact~\ref{fact:1} below is extracted from Section~5 (lower bound of distortion) in \cite{Hruska-Nguyen}, it mainly follows from the proof of Theorem~5.1 in \cite{Hruska-Nguyen}.

\begin{fact}
\label{fact:1} Let $\gamma_n$ be the closed curve in the surface $S_n$ given by Lemma~\ref{lem:existence}. Fix a point $s_0 \in \gamma_{n}$ such that $s_0$ belongs to a circle in the collection of circles in $g_{n}^{-1}(\mathcal{T})$. We relabel $\tilde{s}_{0}$ by $\tilde{x}_0$.
There exists a constants $L \ge 1$ depending on the length of $\gamma_{n}$, and a collection of paths $\bigset{\rho_j}{j \in \N}$ in $\tilde{S}_{n}$ (the path $\rho_j$ is called ``double spiral loop" in \cite{Hruska-Nguyen}) such that the following holds.
\begin{enumerate}
    \item For each $j \in \N$, we have $\tilde{x}_{0} = \rho_{j}(0)$ and $\tilde{x}_{j}:=\rho_{j}(1)$ is an element in the orbit $\pi_1(S_n,s_0)(\tilde{s}_{0})$.
    \item For each $j \in \N$, we have that $[\tilde{x}_0, \tilde{x}_j] -\{\tilde{x}_0, \tilde{x}_j\}$ passes through $4j -1$ Seifert blocks of $\tilde{N}$.
    \item For each $j \in \N$, we have that \[
d(\tilde{x}_0,\tilde{x}_j) \le Lj + L  \,\,\,\text{and} \,\,\, L\,w({\gamma_n})^{j} \le d_{\tilde{S}_{n}}(\tilde{x}_0,\tilde{x}_j).
\]
\end{enumerate}
\end{fact}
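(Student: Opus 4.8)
The plan is to recall and lightly repackage the \emph{double spiral loop} construction and the lower-bound distortion estimate from the proof of Theorem~5.1 of \cite{Hruska-Nguyen}, applied to our surfaces $g_n\colon S_n\looparrowright N$; the three asserted properties are then read off from that construction.

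First I would fix the basepoint $s_0\in\gamma_n$ on a circle $c$ of $g_n^{-1}(\mathcal{T})$ and pass to universal covers, recording the decomposition of $\tilde{S}_n$ into lifts of the components of $S_n-g_n^{-1}(\mathcal{T})$ and of $\tilde{N}$ into lifts of the Seifert blocks (each a copy of $\mathbb{H}^2\times\mathbb{R}$, glued along flat planes). The lifted map $\tilde{g}_n\colon\tilde{S}_n\to\tilde{N}$ carries one pattern to the other, and both are organized by the Bass--Serre tree of the JSJ splitting of $\pi_1(N)$. The double spiral loop $\rho_j$ is then built inductively as the lift, based at $\tilde{x}_0=\tilde{s}_0$, of a loop in $S_n$ based at $s_0$: concretely, a concatenation of $O(j)$ arcs, alternately lifts of subarcs of $\gamma_n$ and arcs running along lifts of the JSJ circles $c_1,c_2$, arranged so that at the $i$-th stage one travels along a $c_1$- or $c_2$-lift a number of times that grows geometrically with ratio $2n+1$. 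This is exactly where the slope data $\mathbf{sl}(e_1)=\mathbf{sl}(e_2)=2n+1$ and $w(\gamma_n)=(2n+1)^2$ of Lemma~\ref{lem:existence} enters. Property~(1) is then immediate: $\rho_j$ represents an element $h_j\in\pi_1(S_n,s_0)$ with $\tilde{x}_j:=\rho_j(1)=h_j\cdot\tilde{s}_0$. For Property~(2), since $\rho_j$ passes through the pieces of $\tilde{S}_n$ monotonically (without backtracking in the Bass--Serre tree) and $\tilde{g}_n$ respects the two tree structures, the geodesic $[\tilde{x}_0,\tilde{x}_j]$ is forced to cross the same combinatorial path of blocks and flats, namely $4j-1$ Seifert block lifts of $\tilde{N}$.

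The substance is Property~(3). For the upper bound $d(\tilde{x}_0,\tilde{x}_j)\le Lj+L$ I would argue block by block: the image path $\tilde{g}_n\circ\rho_j$ enters and leaves each of the $4j-1$ blocks it crosses, and the construction is engineered so that its restriction to each block has length bounded by a constant depending only on $\length(\gamma_n)$ -- the geometric spiraling of $\rho_j$ is absorbed into the fibered structure of the blocks and does not lengthen the image. Since $[\tilde{x}_0,\tilde{x}_j]$ is no longer than $\tilde{g}_n\circ\rho_j$, its length is at most $L(4j-1)+L$. For the lower bound $L\,w(\gamma_n)^j\le d_{\tilde{S}_n}(\tilde{x}_0,\tilde{x}_j)$ I would first check that $\rho_j$ has $\tilde{S}_n$-length $\Theta\bigl((2n+1)^{2j}\bigr)=\Theta\bigl(w(\gamma_n)^j\bigr)$ -- each of the $\Theta(j)$ stages contributes a factor $2n+1$ to the number of traversals of a JSJ-circle lift -- and then that $\rho_j$ is a uniform quasigeodesic in $\tilde{S}_n$. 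Since $\tilde{S}_n$ is isometric to $\mathbb{H}^2$, hence $\delta$-hyperbolic, a local-to-global principle for quasigeodesics reduces this to checking that $\rho_j$ makes monotone progress through the tree of pieces of $\tilde{S}_n$ and is efficient inside each piece; the slope conditions provide exactly this monotonicity. Combining, $d_{\tilde{S}_n}(\tilde{x}_0,\tilde{x}_j)\ge c\cdot\length(\rho_j)-c\ge L\,w(\gamma_n)^j$ after enlarging $L$ and absorbing finitely many small values of $j$.

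I expect the main obstacle to be the simultaneous verification that the double spiral loop is (a) efficient -- a quasigeodesic -- in $\tilde{S}_n$, so that its exponential length is genuinely realized as distance, and (b) cheap in $\tilde{N}$, so that its image has only linear length. Both rest on controlling how the horizontal geometry of $S_n$ -- which near the JSJ tori is arbitrarily ``steep'' relative to the fibers, with steepness governed by the slopes $2n+1$ -- sits inside the product geometry $\mathbb{H}^2\times\mathbb{R}$ of each Seifert block. This is precisely the bookkeeping carried out in Sections~5 and 6 of \cite{Hruska-Nguyen}, which I would import rather than redo, adapting only the explicit slope values coming from Lemma~\ref{lem:existence}.
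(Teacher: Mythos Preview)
Your proposal is correct and aligned with the paper's treatment: the paper does not prove Fact~\ref{fact:1} at all but simply records it as ``extracted from Section~5 (lower bound of distortion) in \cite{Hruska-Nguyen}, \dots mainly follows from the proof of Theorem~5.1 in \cite{Hruska-Nguyen}.'' Your sketch of the double spiral loop construction and the verification of (1)--(3) is a faithful expansion of what that citation contains, so you are doing strictly more than the paper does while following the same route.
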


The following fact is extracted from Section~6 (upper bound of distortion) in \cite{Hruska-Nguyen}. It mainly follows from Claim~1 and Claim~2 in the proof of Theorem~6.1 in \cite{Hruska-Nguyen}.
\begin{fact}
\label{fact:2}
Let $\epsilon_{m} >1$ be the governor of the non-separable, horizontal surface $g_{m} \colon S_{m} \looparrowright N$. There exists a constant $L' \ge 1$ such that the following holds:
For any $x$ and $y$ in $\tilde{S}_{m}$, let $k$ be the number of Seifert blocks where $[x,y]-\{x,y\}$ passes through. Then 
\[
d_{\tilde{S}_m}(x,y) \le L'\,\epsilon_{m}^{k} + L'd(x,y).
\] 
\end{fact}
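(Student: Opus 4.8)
The plan is to reconstruct the block-by-block ``spiralling'' estimate that underlies \cite[Theorem~6.1]{Hruska-Nguyen}. First I would set up the tree-of-spaces picture. View $\tilde N$ as a tree of spaces whose vertex spaces are the universal covers $\tilde M_v \cong \HH^2\times\R$ of the Seifert blocks (the $\HH^2$ factor covering the base surface, the $\R$ factor the fibre) and whose edge spaces are the lifts of the JSJ tori, each a flat plane $\R^2$ that is totally geodesic in each adjacent $\tilde M_v$ and on which the two adjacent fibre classes $[\overleftarrow\beta]$, $[\overrightarrow\beta]$ have intersection number $1$ because $N$ is simple. Correspondingly $\tilde S_m$ is a sub-tree-of-spaces of $\tilde N$: over a block $v$ it meets $\tilde M_v$ in the universal cover of the compact, $\pi_1$--injective horizontal piece $B_v$, realised in $\HH^2\times\R$ as a section $z\mapsto(z,\psi_v(z))$; over an edge plane $P$ it meets $P$ in a single line $L_P$.

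Next I would isolate two local facts. Since each $B_v$ is a compact $\pi_1$--injective subsurface of a compact Seifert block, the inclusion $\tilde M_v\cap\tilde S_m\hookrightarrow\tilde M_v$ is a quasi-isometric embedding whose constants may be taken uniform over the finitely many blocks of $S_m$; so there is $\lambda=\lambda(S_m)$ with $d_{\tilde S_m}(x,y)\le\lambda\,d(x,y)+\lambda$ whenever $x,y$ lie in a common block, which settles the case $k\le 1$. Second, using Lemma~\ref{lem:alternativeslope} together with $\bigabs{[\overleftarrow\beta]\wedge[\overrightarrow\beta]}=1$, the affine change of coordinates on an edge plane $P$ between its two sides carries $L_P$ so that the ``fibre offset'' of a point of $P$ from $L_P$ --- its signed distance from $L_P$ measured in a fibre direction --- is multiplied, when passing from one side's bookkeeping to the other's, by a factor equal to $\mathbf{sl}(e)^{\pm1}$, hence by a factor in $[\,1/\epsilon_m,\ \epsilon_m\,]$.

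Then I would build a shadowing path. Let $[x,y]$ pass through blocks $\tilde M_{v_1},\dots,\tilde M_{v_k}$ across edge planes $P_1,\dots,P_{k-1}$, let $a_i,b_i$ be the entry and exit points of $[x,y]$ in $\tilde M_{v_i}$ (so $b_i=a_{i+1}\in P_i$, $a_1=x$, $b_k=y$), and set $\ell_i=d(a_i,b_i)$, so $\sum_i\ell_i=d(x,y)$. Let $q_i\in L_{P_i}$ be the nearest-point projection of $b_i$ to $L_{P_i}$ inside $P_i$ (with $q_0=x$, $q_k=y$) and let $\sigma$ be the concatenation of geodesics of the sections $\tilde M_{v_i}\cap\tilde S_m$ from $q_{i-1}$ to $q_i$. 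By the first local fact and the triangle inequality through $a_i,b_i$, the $i$-th arc of $\sigma$ has length at most $\lambda\bigl(o_i+\ell_i+o'_i\bigr)+\lambda$, where $o_i=d(a_i,L_{P_{i-1}})$ and $o'_i=d(b_i,L_{P_i})$ are the fibre offsets of $[x,y]$ at the two ends of its excursion through $\tilde M_{v_i}$. These offsets obey a recursion $o_{i+1}\le\epsilon_m\,o'_i+C$ across each plane (by the second local fact) and $\bigabs{o'_i-o_i}\le C\ell_i$ inside each block, with the boundary data $o_1=o'_k=0$ coming from $x,y\in\tilde S_m$. Running this forward from $o_1=0$ and backward from $o'_k=0$, and for each $i$ using whichever of the two estimates is smaller, the offsets telescope so that $\sum_i(o_i+o'_i)\le L''\epsilon_m^{\,k}+L''d(x,y)$; summing the per-arc bounds then gives $d_{\tilde S_m}(x,y)\le\abs{\sigma}\le L'\epsilon_m^{\,k}+L'd(x,y)$.

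The delicate point --- and the reason this is quoted as a ``Fact'' rather than proved in place --- is precisely the last accounting: naive accumulation of the offsets only yields a bound of the shape $\epsilon_m^{\,k}\,d(x,y)$, and getting $d(x,y)$ to enter \emph{additively} requires the two-sided control of the fibre offset along $[x,y]$, namely that it vanishes at both endpoints, so that whatever spiralling builds up in the interior must also unwind before reaching $y$; this is exactly what is extracted in Claims~1 and~2 of \cite[Theorem~6.1]{Hruska-Nguyen}. Everything else --- the tree-of-spaces description of $\tilde N$ and $\tilde S_m$, the uniform quasi-isometric embedding of each horizontal piece, and the identification of the offset-scaling factor via Lemma~\ref{lem:alternativeslope} --- is routine given the setup already developed above.
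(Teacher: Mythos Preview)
The paper does not give a proof of this statement; it merely records it as a fact extracted from Claims~1 and~2 in the proof of \cite[Theorem~6.1]{Hruska-Nguyen}. Your sketch reconstructs the block-by-block shadowing argument that underlies that result and ultimately defers the one genuinely delicate step --- obtaining the additive dependence $L'\epsilon_m^{\,k}+L'd(x,y)$ rather than the naive multiplicative $\epsilon_m^{\,k}d(x,y)$ --- to exactly the same citation, so your approach is consistent with, and more informative than, the paper's own treatment.
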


Let $N$ and $M$ be closed simple graph manifolds. We equip $N$ and $M$ with length metrics, and these metrics induce the metrics in the universal covers $\tilde{N}$ and $\tilde{M}$, denoted by $d$ and $d'$ respectively. It is shown by Behrstock-Neumann \cite{NeuBeh08} that $\tilde{N}$ and $\tilde{M}$ are quasi-isometric.

\begin{lem}
\label{lem:important}
Let $\varphi \colon \tilde{N} \to \tilde{M}$ be a quasi-isometry. There exists a positive constant $D>0$ such that the following holds. For any two points $x$ and $y$ in $\tilde{N}$ such that $x$ and $y$ belong to JSJ planes of $\tilde{N}$ and $[x,y] - \{x,y\}$ passes through $n$ number of Seifert blocks. Let $k$ be the number of Seifert blocks where $[\varphi(x),\varphi(y)] - \{\varphi(x), \varphi(y)\}$ passes through. Then $k \le n + D$.
\end{lem}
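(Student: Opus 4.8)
The plan is to prove this via the coarse geometry of graph manifold universal covers, using the fact that the collection of JSJ planes of $\tilde N$ and $\tilde M$ is preserved coarsely by any quasi-isometry. First I would recall that by the work of Kapovich--Leeb \cite{KL98} (see also Behrstock--Neumann \cite{NeuBeh08}), a quasi-isometry $\varphi\colon \tilde N \to \tilde M$ coarsely permutes the JSJ planes: there is a constant $A$ (depending only on $\varphi$ and the manifolds) so that for each JSJ plane $P$ of $\tilde N$ there is a JSJ plane $P'$ of $\tilde M$ with $d_{\mathcal H}(\varphi(P), P') \le A$, and this induces a bijection on the sets of JSJ planes; moreover $\varphi$ coarsely preserves the Seifert blocks (the ``vertex spaces''), sending each block of $\tilde N$ within Hausdorff distance $A$ of a block of $\tilde M$. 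The second ingredient is that the number of Seifert blocks that $[x,y]-\{x,y\}$ passes through, for $x,y$ lying on JSJ planes, is a quasi-isometry invariant up to additive and multiplicative error: it is coarsely the distance in the Bass--Serre tree $\mathcal{T}_N$ dual to the JSJ decomposition between the vertices (or coarse vertices) corresponding to the blocks containing $x$ and $y$, since a geodesic $[x,y]$ in $\tilde N$ projects to a path in $\mathcal{T}_N$ whose combinatorial length is exactly (or up to bounded error) the number of blocks crossed.

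The key steps, in order, would be: (1) Fix $\varphi$ and extract from \cite{KL98} the constant $A$ and the induced coarse bijection $\Phi$ on JSJ planes / on Bass--Serre tree vertices of $\tilde N$ and $\tilde M$. (2) Observe that $\Phi\colon \mathcal{T}_N \to \mathcal{T}_M$ (extended over vertices) is itself a quasi-isometry of trees with constants controlled by $A$ and the geometry of $N,M$; in particular there are constants $\lambda\ge 1$, $c\ge 0$ with $d_{\mathcal{T}_M}(\Phi(u),\Phi(v)) \le \lambda\, d_{\mathcal{T}_N}(u,v) + c$ for all vertices $u,v$. (3) Given $x,y$ on JSJ planes of $\tilde N$ with $[x,y]-\{x,y\}$ crossing $n$ blocks, let $u,v$ be the Bass--Serre vertices of the first and last blocks crossed; then $n$ equals $d_{\mathcal{T}_N}(u,v)$ up to an additive constant. (4) Since $\varphi(x),\varphi(y)$ lie within $A$ of the JSJ planes $\Phi(P_x),\Phi(P_y)$, and since these planes are adjacent to blocks with Bass--Serre vertices within bounded distance of $\Phi(u),\Phi(v)$, the number $k$ of blocks crossed by $[\varphi(x),\varphi(y)]-\{\varphi(x),\varphi(y)\}$ equals $d_{\mathcal{T}_M}(\Phi(u),\Phi(v))$ up to an additive constant. (5) Combine: $k \le d_{\mathcal{T}_M}(\Phi(u),\Phi(v)) + O(1) \le \lambda\, d_{\mathcal{T}_N}(u,v) + c + O(1) \le \lambda\, n + O(1)$. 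This gives $k \le n + D$ only if $\lambda = 1$; I would therefore need the sharper fact that the induced map on Bass--Serre trees is not merely a quasi-isometry but is \emph{coarsely Lipschitz with multiplicative constant $1$} — equivalently, that $\varphi$ cannot ``compress'' a chain of blocks. This follows because a geodesic in $\tilde N$ crossing $n$ blocks has a definite lower bound $\sim n$ on its length (each block contributes at least a fixed amount, the ``width'' of a block being bounded below), while $\varphi$ is Lipschitz, so $d(\varphi(x),\varphi(y)) \lesssim n$; but more to the point, the adjacency structure is preserved exactly up to bounded multiplicity, so consecutive blocks go to blocks at bounded distance, yielding $d_{\mathcal{T}_M}(\Phi(u),\Phi(v)) \le d_{\mathcal{T}_N}(u,v) + D$ directly, with no multiplicative loss.

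The main obstacle is precisely step (5): ensuring the estimate has the form $k \le n + D$ with additive-only error rather than $k \le \lambda n + c$. The resolution is to work with the \emph{adjacency graph} structure directly rather than with quasi-isometry constants: because $\varphi$ sends each Seifert block $B$ of $\tilde N$ within Hausdorff distance $A$ of a block $\Phi(B)$ of $\tilde M$, and sends adjacent blocks of $\tilde N$ to blocks of $\tilde M$ that are within bounded distance hence (by local finiteness of the block-adjacency pattern within a bounded region) within a uniformly bounded number $D_0$ of edges in $\mathcal{T}_M$ of each other, a chain of $n$ mutually adjacent blocks maps to a chain joining $\Phi(u)$ to $\Phi(v)$ of length at most $D_0 n$ — which is still multiplicative. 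To kill the multiplicative constant one uses that $\varphi$ is a quasi-\emph{isometry}, not just a quasi-Lipschitz map: a quasi-inverse $\bar\varphi$ of $\varphi$ also enjoys all these properties, so applying the $D_0$-estimate to $\bar\varphi$ shows $n \le D_0 k + D_1$; combining with $k \le D_0 n + D_1$ still does not immediately give additive error. The actual fix is that consecutive blocks in $\tilde N$ share a JSJ plane, and $\varphi$ sends that shared plane within $A$ of a single JSJ plane of $\tilde M$, which is shared by two \emph{adjacent} blocks of $\tilde M$ (since JSJ planes in a simple graph manifold separate and are each adjacent to exactly two blocks); hence the images of consecutive blocks are themselves adjacent (or equal) in $\mathcal{T}_M$, giving $D_0 = 1$ and the clean bound $k \le n + D$ where $D$ absorbs the bounded errors at the two endpoints $x,y$ coming from the Hausdorff constant $A$. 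I would write the argument this way, citing \cite{KL98} for the coarse preservation of blocks and JSJ planes and using the elementary tree combinatorics for the rest.
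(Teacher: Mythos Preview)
Your proposal is correct and ultimately rests on the same key ingredient as the paper: Kapovich--Leeb's result that a quasi-isometry coarsely permutes the Seifert blocks and JSJ planes, together with the observation that this correspondence preserves adjacency (so the induced map on Bass--Serre trees has no multiplicative distortion). You take a somewhat circuitous path to this, first setting up a generic quasi-isometry-of-trees framework with constants $(\lambda,c)$, then worrying about killing the multiplicative $\lambda$, and only at the end hitting on the clean adjacency argument.

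The paper's proof reaches the same conclusion more directly, without the tree formalism. It takes JSJ planes $P'_0,P'_n$ in $\tilde M$ within Hausdorff distance $R$ of $\varphi(P_0),\varphi(P_n)$, chooses $x'\in P'_0$, $y'\in P'_n$ with $d'(\varphi(x),x')\le R$ and $d'(\varphi(y),y')\le R$, and asserts (via Kapovich--Leeb, i.e.\ exactly your adjacency-preservation insight) that $[x',y']-\{x',y'\}$ crosses at most $n$ blocks. Your vague ``$D$ absorbs the bounded errors at the endpoints'' is then made completely explicit: if $\rho>0$ is the infimum of distances between JSJ planes in $\tilde M$, then each of $[\varphi(x),x']$ and $[\varphi(y),y']$, having length $\le R$, crosses at most $R/\rho+1$ blocks, giving $k\le n+2R/\rho+2$. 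Your argument is fine; the paper just gets there faster by using the single geometric constant $\rho$ rather than an abstract comparison of tree metrics.
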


\begin{proof}
By Theorem~1.1 \cite{KapovichLeeb98}, there exists a positive constant $R>$ such that for any Seifert block $B$ in $\tilde{N}$, there exists a Seifert block $B'$ in $\tilde{M}$ such that the Hausdorff distance $d_{\mathcal{H}}(B',\varphi(B)) \le R$. Moreover, for any JSJ plane $P$ in $\tilde{N}$, there exists a JSJ plane $P'$ in  $\tilde{M}$ such that $d_{\mathcal{H}}(\varphi(P),P') \le R$.

Let $\rho >0$ be the infimum of the set of the distance of any two JSJ planes in $\tilde{M}$. We let $D = 2R/\rho  + 2$. We are going to prove that $k \le n +D$.

Let $P_{0}$ and $P_{n}$ be the JSJ planes in $\tilde{N}$ such that $x \in P_0$ and $y \in P_n$. Let $P'_{0}$ and $P'_{n}$ be the JSJ planes in $\tilde{M}$ such that $d_{\mathcal{H}}(\varphi(P_0),P'_0) \le R$ and $d_{\mathcal{H}}(\varphi(P_{n}),P'_{n}) \le R$. It follows that there exist $x' \in P'_{0}$ and $y' \in P'_{n}$ such that $d'(\varphi(x),x') \le R$ and $d'(\varphi(y),y') \le R$.

Let $a$ be the number of Seifert blocks where $[\varphi(x),x'] - \{\varphi(x),x'\}$ passing through. Let $b$ be the number of Seifert blocks where $[\varphi(y),y'] - \{\varphi(y),y'\}$ passing through. We note that the number of Seifert blocks where $[x',y'] - \{x',y'\}$ passing through is no more than $n$. Thus, $k \le a + n +b$.

Since $\rho$ is the smallest distance of any two JSJ planes in $\tilde{M}$, we have $\rho\, (a-1) \le d'(\varphi(x),x')$ and $\rho\, (b-1) \le d'(\varphi(y),y')$. Since $d'(\varphi(x),x') \le R$ and $d'(\varphi(y),y') \le R$, it follows that  $\rho \,a \le R + \rho$ and  $\rho \,b \le R + \rho$ . Hence $a \le R/\rho + 1$ and $b \le R/\rho + 1$. It follows that $k \le n + a +b \le n + 2R/\rho  + 2 = n +D$.
\end{proof}

\section{Proof of the main Theorem}
\label{sec:proofthm}
In this section, we give the proof of Theorem~\ref{thm:main} by showing that there is a infinite collection of natural numbers $\mathcal{F}$ such that the collection of non-separable, horizontal surfaces $\bigset{g_{j} \colon S_{j} \looparrowright N}{j \in \mathcal{F}}$ given by Lemma~\ref{lem:existence} satisfy the conclusion of Theorem~\ref{thm:main}. 

\begin{proof}[Proof of Theorem~\ref{thm:main}]
For each $j \in \N$, let $S_{j} \looparrowright N$ be the non-separable, horizontal surface given by Lemma~\ref{lem:existence}. Let $\gamma_j$ be the simple closed curve given by Lemma~\ref{lem:existence}. Let $\epsilon_j$ be the governor of the horizontal surface $S_j \looparrowright N$. We note that $\epsilon_j = 2j +1$ and $w(\gamma_j) = (2j+1)^2$ by Lemma~\ref{lem:existence}.

Let $\mathcal{F}$ be a infinite collection of natural numbers such that for any elements $n$ and $m$ in $\mathcal{F}$ we have $(2m+1)^{2} < 2n +1$ whenever $m < n$ (the existence of this collection is easy to see, for instance, we may define $\tau(j)$ inductively by letting $\tau(j+1) = \bigl (2\tau(j) +1  \bigr )^2 +1$ and then we may let $\mathcal{F} = \bigset{\tau(j)}{j \in \N}$).

To prove the theorem we only need to show that if $n$ and $m$ are two elements $\mathcal{F}$ such that $m < n$ then $\bigl (\pi_1(N),\pi_1(S_{n}) \bigr )$ and $\bigl (\pi_1(N),\pi_1(S_m) \bigr )$ are not quasi-isometric. We prove this by contradiction. We briefly describe here how do we get a contradiction. Suppose that $\bigl (\pi_1(N),\pi_1(S_{n}) \bigr )$ and $\bigl (\pi_1(N),\pi_1(S_m) \bigr )$ are quasi-isometric, then we are going to show $w(\gamma_n) \le \epsilon_{m}^{4}$. From this inequality and the facts $\epsilon_j = 2j +1$ and $w(\gamma_j) = (2j+1)^2$, we get that $2n +1 \le (2m+1)^2$. Since $m, n \in \mathcal{F}$ and $m <n$, it follows that $(2m+1)^2 < 2n+ 1$. The contradiction comes from two inequalities $2n +1 \le (2m+1)^2$ and $(2m+1)^2 < 2n+ 1$.

We fix a finite generating set $\mathcal{A}_n$ of $\pi_1(S_n)$, a finite generating set $\mathcal{A}_{m}$ of $\pi_1(S_m)$ and a finite generating set $\mathcal{S}$ of $\pi_1(N)$ so that $\mathcal{A}_n \subset \mathcal{S}$ and $\mathcal{A}_m \subset \mathcal{S}$. We note that $\mathcal{S}$ depends on the choice of $m$ and $n$.
Assume that $\bigl (\pi_1(N),\pi_1(S_{n}) \bigr )$ and $\bigl (\pi_1(N),\pi_1(S_m) \bigr )$ are quasi-isometric, it follows that $\bigl (\tilde{N}, \tilde{S}_m \bigr )$ and $(\bigl (\tilde{N},\tilde{S}_n \bigr )$ are quasi-isometric by Lemma~\ref{cor: geometric relative quasi to group relative quasi}. Hence, there exists a  positive constant $L_1$ and an $(L_1,L_1)$--quasi-isometry map $\varphi \colon \tilde{N} \to \tilde{N}$ such that $\varphi(\tilde{S}_n) \subseteq \tilde{S}_m$ and $\tilde{S}_m \subseteq \mathcal{N}_{L_1}\bigl (\varphi(\tilde{S}_n) \bigr )$.

Let $L \ge 1$ be the constant given by Fact~\ref{fact:1} with respect to the horizontal surface $S_{n} \looparrowright N$. Let $L' \ge 1$ be the constant given by Fact~\ref{fact:2} with respect to the horizontal surface $S_{m} \looparrowright N$.
Let $A = \max \{L,L',L_1\}$.

Let $\bigset{\tilde{x}_{j}}{j \in \N}$ be the collection of points given by Fact~\ref{fact:1}. Let $D$ be the constant given by Lemma~\ref{lem:important}. We first claim that
\begin{equation}
    \tag{$**$}
    \label{eq:doublestar}
  d_{\tilde{S}_{m}}\bigl (\varphi(\tilde{x}_0),\varphi(\tilde{x}_j) \bigr ) \le A \epsilon_{m}^{4j +D} + A^{3}j + A^{3} + A^{2}  
\end{equation}
Indeed, From Fact~\ref{fact:1} we have
\[
d(\tilde{x}_0,\tilde{x}_j) \le Aj + A  \,\,\,\text{and} \,\,\, w({\gamma_n})^{j} \le A\,d_{\tilde{S}_{n}}(\tilde{x}_0,\tilde{x}_j)
\]

Using the above inequality and the fact $\varphi$ is a $(A,A)$--quasi-isometry, we get that
\[
d\bigl (\varphi(\tilde{x}_0),\varphi(\tilde{x}_j) \bigr ) \le A\,d(\tilde{x}_0,\tilde{x}_j) + A \le A(Aj + A) + A = A^{2}j + A^{2} + A
\]

We recall that $[\tilde{x}_0, \tilde{x}_j] - \{\tilde{x}_0, \tilde{x}_j\}$ passes through $2j$ Seifert blocks of $\tilde{N}$. 
Let $[\varphi(\tilde{x}_0),\varphi(\tilde{x}_j)]$ be a geodesic in $\tilde{N}$ connecting $\varphi(\tilde{x}_0)$ to $\varphi(\tilde{x}_j)$. Let $k$ be the number of Seifert blocks of $\tilde{N}$ where $[\varphi(\tilde{x}_0),\varphi(\tilde{x}_j)] - \{\varphi(\tilde{x}_0),\varphi(\tilde{x}_j)\}$ passes through. By Lemma~\ref{lem:important}, we have that $k \le 2j +D$.
Using Fact~\ref{fact:2}, the above inequality $d\bigl (\varphi(\tilde{x}_0),\varphi(\tilde{x}_j) \bigr ) \le A^{2}j + A^{2} + A$, and $k \le 4j +D$ we get that
\begin{align*}
  d_{\tilde{S}_{m}}\bigl (\varphi(\tilde{x}_0),\varphi(\tilde{x}_j) \bigr ) &\le A\,\epsilon_{m}^{k} + Ad\bigl (\varphi(\tilde{x}_0),\varphi(\tilde{x}_j) \bigr ) \\
  &\le A\,\epsilon_{m}^{k} + A(A^{2}j + A^{2} + A)\\
  &\le A\,\epsilon_{m}^{k} + A^{3}j + A^{3} + A^{2} \le A\epsilon_{m}^{4j+D} +A^{3}j +A^{3} + A^{2}\
\end{align*}
Thus (\ref{eq:doublestar}) is established.

By Lemma~\ref{lem:invariant of length}, there exists constant $\xi >0$ such that
\[
d_{\tilde{S}_n}(\tilde{x}_0, \tilde{x}_j) \le \xi \,d_{\tilde{S}_m}\bigl (\varphi(\tilde{x}_0),\varphi(\tilde{x}_j) \bigr ) + \xi
\] for all $j$.

We use Fact~\ref{fact:1}, the above inequality, and (\ref{eq:doublestar})
to get that \begin{align*}
    w({\gamma_n})^{j} &\le A\,d_{\tilde{S}_n}(\tilde{x}_0, \tilde{x}_j)\\ 
    &\le A\, \bigl ( \xi d_{\tilde{S}_m}\bigl (\varphi(\tilde{x}_0),\varphi(\tilde{x}_j) \bigr ) + \xi \bigr )\\
    &= A\xi d_{\tilde{S}_m}\bigl (\varphi(\tilde{x}_0),\varphi(\tilde{x}_j) \bigr ) + A\xi\\
    &\le A\xi \bigl ( A\epsilon_{m}^{k} + A^{3}j + A^{3} + A^{2} \bigr ) + A\xi \\  
    &\le A\xi \bigl ( A\epsilon_{m}^{4j+D} + A^{3}j + A^{3} + A^{2} \bigr ) + A\xi \\
    &= A^{2}\xi\epsilon_{m}^{4j+D} + A^{4}\xi j + A^{4}\xi + A^{3}\xi + A\xi\
\end{align*}
We divide both sides of the inequality \[w({\gamma_n})^{j} \le A^{2}\xi\epsilon_{m}^{4j+D} + A^{4}\xi j + A^{4}\xi + A^{3}\xi + A\xi \] by $\epsilon_{m}^{4j}$ to get that
\[
\Bigl(w(\gamma_n) \bigl/\epsilon_{m}^4\Bigr)^{j} \le A^{2}\xi\epsilon_{m}^{D} + A^{4}\xi j \bigl / \epsilon_{m}^{4j} + \bigl ( A^{4}\xi + A^{3}\xi + A\xi \bigr ) \bigl / \epsilon_{m}^{4j}
\] for all $j \in \N$.

Since $\epsilon_{m} >1$, we have 
\[\lim_{j \to \infty} \Bigl(A^{2}\xi\epsilon_{m}^{D} + A^{4}\xi j \bigl / \epsilon_{m}^{4j} + \bigl ( A^{4}\xi + A^{3}\xi + A\xi \bigr ) \bigl / \epsilon_{m}^{4j} \Bigr) = A^{2}\xi\epsilon_{m}^{D}.
\]
Hence $\lim_{j \to \infty}\Bigl(w(\gamma_n) \bigl/\epsilon_{m}^4\Bigr)^{j} \le A^{2}\xi\epsilon_{m}^{D}$. It follows that $w(\gamma_n) \bigl / \epsilon_{m}^{4} \le 1$, otherwise we will get $\infty \le A^{2}\xi\epsilon_{m}^{D}$. Thus 
\[
w(\gamma_n) \le \epsilon_{m}^{4}
\]
Since $w(\gamma_n) = (2n+1)^2$ and $\epsilon_m = 2m+1$, it follows that $(2n+1)^2 \le (2m+1)^4$. Hence $2n +1 \le (2m+1)^2$. 

We note that $m <n$ and $m,n \in \mathcal{F}$, thus by the definition of $\mathcal{F}$ we have $(2m+1)^2 < 2n+1$. Combining two inequalities $2n +1 \le (2m+1)^2$ and $2n +1 \le (2m+1)^2$, we have $n <n$, a contradiction.
 The theorem is established.
\end{proof}

\section{Appendix}
In this section, we give an evidence supporting Conjecture~\ref{conj} by showing that other geometric invariants in literature such as subgroup distortion, $k$--volume distortion, relative upper divergence, relative lower divergence could not be used to distinguish quasi-isometry of pairs of separable, horizontal surfaces in graph manifolds.

\subsection{$k$--volume distortion}
$k$--volume distortion ($k \ge 1$) is a notion introduced by Bennett \cite{Bennett2011}. We remark that this notion agrees with subgroup distortion when $k=1$ and area distortion (introduced by Gersten \cite{GerstenArea}) when $k=2$. We refer the reader to \cite{Bennett2011} for a precise definition of $k$--volume distortion. 
\begin{prop}
Let $S \looparrowright N$ be a separable, horizontal surface in a graph manifold $N$. Then the $k$--volume distortion of $\pi_1(S)$ in $\pi_1(N)$ is quadratic when $k=1$, is linear when $k=2$ and is trivial when $k \ge 3$.
\end{prop}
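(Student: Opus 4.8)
The plan is to reduce each of the three regimes to an already-understood quasi-isometry invariant by exploiting the structure of a separable horizontal surface in a graph manifold. The governing fact is that when $S \looparrowright N$ is separable, its spirality homomorphism $w$ is trivial (Remark~\ref{rem:main}), so the immersion $S \looparrowright N$ lifts to an \emph{embedding} $\hat S \inclusion \hat N$ in a finite cover $\hat N$ of $N$; moreover $\hat N$ can be taken to be a simple graph manifold. Since $k$--volume distortion, like subgroup distortion, is insensitive to passing to finite-index subgroups (this follows from the quasi-isometry invariance of the notion together with Example~\ref{exmp:mainexample}(1)), it suffices to compute the $k$--volume distortion of $\pi_1(\hat S)$ in $\pi_1(\hat N)$ where $\hat S$ is embedded. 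First I would set up this reduction carefully, making sure the $k$--volume distortion function is genuinely a quasi-isometry-of-pairs invariant in the sense of Definition~\ref{defn:quasipairs}, so that replacing $(N,S)$ by $(\hat N,\hat S)$ changes the distortion function only up to the usual equivalence of growth functions.

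Next, with $\hat S$ embedded and horizontal, I would analyze the three ranges separately. For $k\ge 3$: a horizontal surface is $2$--dimensional, so the only $k$--cycles in $\hat S$ with $k\ge 3$ are nullhomotopic and bound in $\hat S$ itself with bounded volume; hence there is nothing to distort and the distortion is trivial. For $k=2$: the relevant comparison is between the area of a $2$--cycle (a disk, or a singular surface) filled inside $\tilde{\hat S}\cong\Hyp^2$ and filled inside $\tilde{\hat N}$. Because $\tilde{\hat S}$ is a totally geodesic-like embedded plane (quasi-isometrically a hyperbolic plane) sitting inside $\tilde{\hat N}$, and because filling a loop of length $\ell$ in a hyperbolic plane already costs area $\sim\ell$ while filling it in $\tilde{\hat N}$ costs at least a comparable amount (the isoperimetric function of $\tilde{\hat N}$ is at worst quadratic, but restricted to loops lying in a horizontal plane one gets a linear lower bound from the ambient non-positive-curvature-like behavior of graph manifold universal covers), the area distortion is linear. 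For $k=1$: this is exactly subgroup distortion, and the quadratic answer in the separable case is precisely the theorem of Hruska--Nguyen \cite{Hruska-Nguyen} quoted in the introduction.

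The key technical inputs I would lean on are: (i) the lift-to-embedding statement for separable horizontal surfaces, combined with finite-index invariance of $k$--volume distortion; (ii) the coarse geometry of $\tilde N$ for a (simple) graph manifold, in particular that each JSJ plane is undistorted and that $\tilde N$ has a tree-of-spaces structure with Seifert pieces $\cong \Hyp^2\times\R$; and (iii) the precise definition of $k$--volume distortion from \cite{Bennett2011}, which compares the filling volume of a $k$--cycle supported in $\pi_1(S)$ using chains in $\pi_1(S)$ versus chains in $\pi_1(N)$.

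I expect the main obstacle to be the $k=2$ (area) case, specifically establishing the \emph{linear lower bound}: one must show that a family of loops in the horizontal plane requiring linear area to fill \emph{inside the plane} cannot be filled with sublinear area \emph{in the ambient} $\tilde N$. The upper bound (filling in the plane is at most quadratic, hence the distortion is at most linear after the obvious normalization, or rather: any loop fillable with area $a$ in $\tilde N$ is fillable with area $O(a)$ in the plane) is the easy direction. For the hard direction I would argue that a minimal-area filling in $\tilde N$ of a loop lying in a horizontal plane $P$ can be coned back into $P$ using a nearest-point-type retraction $\tilde N \to P$, which is coarsely Lipschitz on a neighborhood of $P$ because $P$ is an undistorted, coarsely convex subset; the retraction increases area by at most a bounded factor, giving the needed linear comparison. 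Making the retraction and the area estimate precise in the graph-manifold setting — where $P$ meets infinitely many Seifert blocks and the fibers in adjacent blocks are transverse — is where the real work lies, and I would handle it block-by-block along the tree of spaces underlying $\tilde N$.
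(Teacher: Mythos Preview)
Your treatment of $k=1$ and $k\ge 3$ is essentially the paper's: cite \cite{Hruska-Nguyen} for the quadratic subgroup distortion, and observe that a $2$--dimensional $K(\pi_1(S),1)$ has no $k$--cells for $k\ge 3$, so the $k$--volume distortion is trivial by definition. The preliminary reduction to an embedded lift in a finite cover is not needed for any of the three cases and the paper does not make it.

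For $k=2$ there is a genuine gap in your plan, and the paper takes a much shorter route. Your retraction argument hinges on the claim that the horizontal plane $P=\tilde S\subset\tilde N$ is ``undistorted, coarsely convex'', but this directly contradicts the $k=1$ case you just invoked: $\pi_1(S)$ is \emph{quadratically} distorted in $\pi_1(N)$, so $P$ is neither undistorted nor coarsely convex, and a nearest-point-type retraction $\tilde N\to P$ will not be coarsely Lipschitz. Fillings of loops in $P$ can wander far from $P$ in $\tilde N$, and pushing them back costs you exactly the distortion you are trying to control. So the block-by-block retraction scheme, as stated, does not go through.

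The paper sidesteps all of this with a one-line appeal to Gersten: the paragraph after Proposition~5.4 in \cite{GerstenArea} shows that whenever the subgroup $H$ has \emph{linear Dehn function}, the area ($2$--volume) distortion of $H$ in $G$ is linear, regardless of the ambient geometry of $G$ or how $H$ sits inside it. Since $\pi_1(S)$ is the fundamental group of a closed hyperbolic surface, it is word-hyperbolic and hence has linear Dehn function, and the $k=2$ case follows immediately. No embedding, no retraction, no tree-of-spaces analysis is required.
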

\begin{proof}
$1$--volume distortion (i.e, subgroup distortion) of $\pi_1(S)$ in $\pi_1(N)$ is quadratic (see \cite{Hruska-Nguyen}).  
We are going to show that $2$--volume distortion (i.e, area subgroup distortion) of $\pi_1(S)$ in $\pi_1(N)$ is linear. Indeed, the paragraph after Proposition~5.4 in \cite{GerstenArea} shows that if the Dehn function of $\pi_1(S)$ is linear then $2$--volume distortion of $\pi_1(S)$ in $\pi_1(N)$ is linear. Since Dehn function of the fundamental group of a hyperbolic surface is linear. The claim is confirmed. Finally, we consider the case $k \ge 3$. Since there is no $k$--cell in the universal cover $\tilde{S}$, it follows from the definition of $k$--volume distortion that $k$--volume distortion ($k \ge 3$) of $\pi_1(S)$ in $\pi_1(N)$ is trivial.
\end{proof}

\subsection{Relative divergence}
In \cite{Hung15}, Tran introduces the notions of relative upper divergence and relative lower divergence of a pair of finitely generated groups $H\le G$, denoted by $Div(G,H)$ and $div(G,H)$ respectively, and shows that relative upper divergence and relative lower divergence are quasi-isometric invariants (see Proposition~4.3 and Proposition~4.9 in \cite{Hung15}). Since relative upper divergence and relative lower divergence are quite technical and we only use results established in \cite{Hung15}, \cite{Tran17}, we refer the reader to \cite{Hung15} for a precise definition.
\begin{prop}
\label{prop:div}
Let $g\colon{(S,s_0)}\looparrowright{(N,x_0)}$ be a separable, horizontal surface in a graph manifold $N$. Let $G = \pi_1(N,x_0)$ and $H = \pi_1(S,s_0)$. Then $Div(G,H)$ is quadratic and $div(G,H)$ is linear.
\end{prop}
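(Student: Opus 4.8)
The plan is to establish the two estimates separately, using the structure theory of the relative divergence for pairs associated to graph manifolds together with the recent results of Tran \cite{Tran17}. First I would recall that a separable horizontal surface $g\colon S \looparrowright N$ has trivial spirality by Remark~\ref{rem:main}, so $\pi_1(S)$ is virtually a normal subgroup after passing to a finite cover; more precisely, by Rubinstein-Wang the immersion lifts to an embedding in a finite cover $\hat N$ of $N$, and by Example~\ref{exmp:mainexample} passing to finite-index subgroups does not change the quasi-isometry type of the pair, hence (since relative upper and lower divergence are quasi-isometric invariants of pairs, Propositions~4.3 and 4.9 in \cite{Hung15}) it suffices to compute $Div$ and $div$ for the embedded surface subgroup $\pi_1(\hat S)\le \pi_1(\hat N)$. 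Thus I reduce to the geometry of a single embedded horizontal surface group inside a graph-manifold group acting on $\tilde N$, where $\tilde{\hat S}$ is a union of JSJ planes glued along fiber lines.

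For the upper bound $Div(G,H) \preceq n^2$ and lower bound $div(G,H) \succeq n$, I would invoke the general machinery of \cite{Tran17}, which computes relative divergence in terms of how geodesics avoiding a neighborhood of $H$ behave: the key input is that $\tilde{\hat S}$ is a quasiconvex (in fact, with quadratic distortion by \cite{Hruska-Nguyen}, so not quasiconvex, but with controlled distortion) subset whose complement in $\tilde N$ is connected and "coarsely $1$-ended" relative to $\tilde{\hat S}$. Concretely, I expect the argument to run as follows: (i) show that for $x,y\in\tilde{\hat S}$ far apart there is a path in $\tilde N$ of length $O(d(x,y))$ avoiding the $r$-neighborhood of $\tilde{\hat S}$ for any fixed $r$, by pushing the geodesic $[x,y]$ off into an adjacent Seifert block and using that each block is bi-Lipschitz to $\Hyp^2\times\R$ — this gives the \emph{linear} lower bound for $div$; (ii) to get the \emph{quadratic} upper bound for $Div$, bound the length of the shortest detour: the obstruction to a short detour is exactly the quadratic distortion of $\pi_1(S)$, and a detour of quadratic length suffices because one can route around the surface using the product structure of a single block where the fiber direction costs at most quadratically (this mirrors the divergence computation for $\Hyp^2\times\R$ and for graph-manifold groups in \cite{Hung15}).

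The key steps, in order, would be: (1) reduce to an embedded horizontal surface via \cite{RW98} and invariance under finite covers; (2) set up the universal cover picture, identifying $\tilde{\hat S}$ as a union of JSJ planes and noting each Seifert block of $\tilde N$ is quasi-isometric to $\Hyp^2\times\R$; (3) prove the linear lower bound for $div(G,H)$ by exhibiting, for each scale, efficient avoidant paths using transversality of the horizontal surface to the fibration and the product geometry; (4) prove the quadratic upper bound for $Div(G,H)$ by constructing avoidant paths of quadratic length, the cost being dictated by the known quadratic distortion of $\pi_1(S)$ in $\pi_1(N)$ from \cite{Hruska-Nguyen}; (5) match these against the definitions in \cite{Hung15} to conclude $Div(G,H)$ is (at least, and hence exactly, since quadratic distortion forces $Div\succeq n^2$) quadratic and $div(G,H)$ is linear.

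The main obstacle I anticipate is step (4): showing the \emph{upper} bound on relative upper divergence is genuinely quadratic rather than worse. This requires a careful choice of avoidant path — one must navigate from one JSJ plane of $\tilde{\hat S}$ to another while staying outside a fixed neighborhood of the \emph{entire} surface $\tilde{\hat S}$ (not just the two planes containing the endpoints), and a naive detour through several blocks could accumulate exponential length because of the spiralling of nearby planes. The resolution should be that since the surface is separable, the nearby planes of $\tilde{\hat S}$ are arranged periodically (no spiralling, by triviality of spirality), so one can find a uniformly efficient "corridor" between them; making this precise, and checking it fits the technical definition of $Div$ in \cite{Hung15}, is where the real work lies. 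I would lean on the explicit divergence estimates for graph manifold groups and for right-angled setups already present in \cite{Hung15,Tran17} to shortcut the hardest computations.
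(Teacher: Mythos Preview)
Your reduction to a finite cover is the right first move, and the paper does the same --- in fact it goes further, invoking Wang--Yu \cite{WangYu} to arrange that $\hat N$ is an $\hat S$--bundle over $S^1$, so that $H$ is \emph{normal} in $G = H \rtimes_\phi \Z$. This normality is then exploited directly: Theorem~5.4 of \cite{Hung15} gives $div(G,H) \preceq dist^G_H$ for normal $H$, and $dist^G_H$ is bounded linearly via an undistorted circle subgroup $K = \pi_1(c) \le H$ coming from a curve of $g^{-1}(\mathcal{T})$. So in the paper the $div$ computation is entirely algebraic, bypassing the geometric path-pushing you outline (which would probably also succeed, but is more labor).

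The genuine gap is in your treatment of $Div(G,H)$. You have the direction of the distortion inequality reversed: Proposition~4.3 of \cite{Tran17} says $Div(G,H) \preceq \Delta_H^G$, so quadratic distortion of $H$ in $G$ gives the \emph{upper} bound $Div(G,H) \preceq n^2$ for free --- there is no avoidant-path construction to carry out, and your ``main obstacle'' paragraph is aimed at a non-problem. Conversely, your parenthetical claim that ``quadratic distortion forces $Div \succeq n^2$'' is not a theorem, and the lower bound $Div(G,H) \succeq n^2$ is where the actual content lies. The paper's argument (Lemma~\ref{lem:lowerdiv}) is: choose $h = [\gamma] \in H$ with $\gamma$ crossing $g^{-1}(\mathcal{T})$ nontrivially; then $h$ acts hyperbolically on the Bass--Serre tree of $N$, hence is a contracting and therefore Morse element by Sisto \cite{Sisto11}; since $\tilde N$ is bilipschitz to a $\CAT(0)$ space by Kapovich--Leeb \cite{KL98}, the quasi-geodesic $n \mapsto h^n$ has at least quadratic divergence \cite{BD14,Sultan14}; finally a lemma implicit in \cite{Tran17} (recorded here as Lemma~\ref{lem: lowerbound Div}) transfers $Div(\alpha) \preceq Div(G,H)$. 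None of this machinery --- Morse elements, contracting isometries, $\CAT(0)$ divergence --- appears in your plan, and without it you have no lower bound on $Div(G,H)$.
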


If a horizontal surface $g \colon S \looparrowright N$ is separable then there exist finite covers $\hat{S}\to S$ and $\hat{N}\to N$ such that $\hat{N}$ is an $\hat{S}$--bundle over $S^1$ (see  \cite{WangYu}).  Relative upper divergence  and relative lower divergence are unchanged when passing to subgroups of finite index, so for the rest of this section, without of loss generality we assume the graph manifold $N$ fibers over $S^1$ with the fiber $S$. We remark that $N$ is the mapping torus of a homeomorphism $f \in \Aut(S)$.
In particular, if we let $G = \pi_1(N)$ and $H = \pi_1(S)$, then $G = H \rtimes_\phi \Z$, where $\phi \in \Aut(H)$ is an automorphism induced by $f$. We note that that the distortion of $\pi_1(S)$ in $\pi_1(N)$ is quadratic as $S$ is embedded in $N$.

The proof of Proposition~\ref{prop:div} is a combination of Lemma~\ref{lem:uperbound of Div} and Lemma~\ref{lem:lowerdiv}.
\begin{lem}
\label{lem:uperbound of Div}
 $Div(G,H)$ is at most quadratic  and $div(G,H)$ is linear.
\end{lem}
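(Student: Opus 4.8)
The plan is to exploit the assumption (reduced to without loss of generality earlier in this section) that $N$ fibers over $S^1$ with fiber $S$, so that $G = H \rtimes_\phi \Z$ with $\phi \in \Aut(H)$ induced by the monodromy $f$, together with the known fact that the distortion of $H$ in $G$ is quadratic. For the upper bound on $Div(G,H)$, I would unwind Tran's definition: one must bound, for a pair of points $x,y$ on a sphere of radius $r$ in $\Gamma(G,\mathcal S)$ that lie in the same coset-type region relative to $H$, the length of a path avoiding a $\rho r$-neighborhood of $H$ (for the chosen parameters $\rho \in (0,1]$, $0 < \delta \le 1$). Because $G$ is the mapping torus group, any element $g \in G$ is written uniquely as $h t^k$ with $h \in H$, $k \in \Z$; I would move both endpoints ``upward'' in the $\Z$-direction by a controlled amount $\sim \log r$ (using that conjugation by $t$ contracts word length in $H$ at an exponential rate, the quantitative source of the quadratic distortion), connect them along a translate $Ht^{k}$ which is far from $H$, and come back down. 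The length of this detour is controlled by $r$ plus the cost of travelling up and down, which is $O(\log r)$ per unit of horizontal displacement, giving a path of length $O(r^2)$ that stays outside the prescribed neighborhood; this yields $Div(G,H) \preceq r^2$. I would cite Lemma~\ref{lem:invariant of length}-style control of $H$-length only if needed; the essential input is the exponential contraction of $\phi$ and $\phi^{-1}$ up to a linear error, which is exactly what ``$H$ is quadratically distorted in the mapping torus $G$'' encodes.

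For the statement that $div(G,H)$ is linear, I would instead produce, for each $r$, an \emph{efficient} detour of length $O(r)$, showing the lower divergence grows no faster than linearly; since relative lower divergence is by definition nondecreasing and at least linear for any pair with $H$ infinite index and $G$ one-ended (which holds here), this pins it down to exactly linear. Concretely, given $x,y$ on the $r$-sphere in the same region, I would route through a single adjacent coset $Ht$ (or $Ht^{-1}$): the horizontal part has length $O(r)$ and the two vertical hops have length $O(1)$, and one checks this path escapes the $\rho r$-neighborhood of $H$ because points of $Ht^{\pm 1}$ are at bounded distance from $H$ only through the ``vertical'' direction, whose contribution to the distance from $H$ is genuinely of order the horizontal size once we are off the $H$-coset — here I would use that $N$ is a graph manifold fibering over $S^1$ so that distinct fibers $\tilde S t^k$ in $\tilde N$ are separated in a controlled way. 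This gives $div(G,H) \preceq r$, and combined with the general lower bound we get linearity.

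The main obstacle I anticipate is purely bookkeeping in Tran's framework: verifying that the detour paths I construct actually avoid the $\rho r$-neighborhood of $H$ for the relevant range of the parameter $\rho$, and checking the quantifier structure ($\sup$ over $\rho$, $\liminf$/$\limsup$ over the scaling) so that the asymptotic bounds I obtain translate into the claimed $\preceq r^2$ and $\preceq r$ statements in the equivalence classes used to define $Div$ and $div$. All of this is routine given the explicit semidirect-product structure and the quadratic distortion estimate, but it requires care; I would organize it as two short sub-claims (``the upward-then-across-then-downward path has length $O(r^2)$ and avoids the neighborhood'' and ``the across-one-coset path has length $O(r)$ and avoids the neighborhood''), each proved by a direct computation in $H \rtimes_\phi \Z$ using that $|\phi^{\pm k}(h)|_H \le C\lambda^{-k}|h|_H + Ck$ for constants $C \ge 1$, $\lambda > 1$.
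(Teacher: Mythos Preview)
Your plan has a genuine gap in both parts.

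For $Div(G,H)$: the inequality you intend to use, $|\phi^{\pm k}(h)|_H \le C\lambda^{-k}|h|_H + Ck$ with $\lambda>1$, is false in this setting. Uniform exponential contraction of the monodromy would force \emph{exponential} distortion of $H$ in $H\rtimes_\phi\Z$, not quadratic (indeed your own detour, ``go up $\sim\log r$, cross, come down'', would then have length $O(\log r)$, not $O(r^2)$). The monodromy of a horizontal surface in a graph manifold is reducible; on each piece it is essentially periodic, and the quadratic distortion arises from a Heisenberg/Dehn--twist type mechanism, not exponential contraction. Separately, going up by only $\sim\log r$ does not leave the $\rho r$--neighborhood of $H$, since in $H\rtimes_\phi\Z$ the distance from $ht^k$ to $H$ is at most $|k|$; your detour would therefore not be admissible.

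For $div(G,H)$: routing through the adjacent coset $Ht^{\pm 1}$ cannot work, because every point of $Ht^{\pm 1}$ lies at distance at most $1$ from $H$. Such a path never exits the $\rho r$--neighborhood of $H$ for any $r\ge 2$, so it is not an admissible detour in Tran's definition. The issue you flagged as ``bookkeeping'' is in fact fatal to the construction as written.

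The paper's argument avoids all of this by quoting two general inequalities of Tran and then feeding in structural facts about the pair. First, $Div(G,H)\preceq \Delta_H^G$ (Proposition~4.3 of \cite{Tran17}) together with the known quadratic distortion of $H$ in $G$ gives $Div(G,H)\preceq n^2$ immediately. Second, since $H$ is normal in $G$, Theorem~5.4 of \cite{Hung15} gives $div(G,H)\preceq dist_H^G$, where $dist_H^G(n)=\min\{|h|_{\mathcal A}: h\in H,\ |h|_{\mathcal S}\ge n\}$; to bound this the paper does \emph{not} analyze the monodromy at all, but instead observes that a boundary circle $c\subset g^{-1}(\mathcal T)$ gives an infinite cyclic subgroup $K=\pi_1(c)\le H$ that is undistorted in $G$, and then uses $dist_H^G\preceq dist_K^G\preceq \Delta_K^G$ (Theorem~3.6 and Proposition~3.5 of \cite{Hung15}) to conclude linearity. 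If you want to proceed by hand, the correct replacement for your exponential estimate is a linear one, $|\phi^{\pm k}(h)|_H \le |h|_H + Ck$ on the relevant elements, and the correct ``short detour'' for $div$ should exploit such an undistorted $K\le H$ rather than a nearby coset of $H$.
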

\begin{proof}
We note that $G = H \rtimes_\phi \Z$, where $\phi \in \Aut(H)$. We fix finite generating sets $\mathcal{A}$ and $\mathcal{B}$ of $H$ and $G$ respectively.
By Proposition~4.3 in \cite{Tran17} we have $Div(G,H) \preceq \Delta_{H}^{G}$. Since $H$ is quadratically distorted in $G$, it follows that $Div(G,H) \preceq n^2$.

To see that $div(G,H)$ is linear, it suffices to show $div(G,H)$ is dominated by a linear function (because $div(G,H)$ is always bounded below by a linear function).
Since $H$ is a normal subgroup in $G$, by Theorem~5.4 in \cite{Hung15} we have $div(G,H) \preceq dist^{G}_{H}$ where $dist^{G}_{H}(n) =\min \bigset{|h|_{\mathcal{A}}}{h \in H, |h|_{\mathcal{B}} \ge n}$. We fix a circle $c$ in $g^{-1}(\mathcal{T})$ where $\mathcal{T}$ is the collection of JSJ tori of $N$. Let $K = \pi_1(c)$. We note that $K$ is undistorted in $G$. By Theorem~3.6 and Proposition~3.5 in \cite{Hung15}, we have $dist^{G}_{H} \preceq dist^{G}_{K} \preceq \Delta^{G}_{K}$. It follows that $dist^{G}_{H}$ is dominated by a linear function because $K$ is undistorted in $G$.
\end{proof}

\begin{defn}
The  divergence of a bi-infinite quasi-geodesic $\alpha$, denoted by $Div(\alpha)$, is a function $g \colon (0,\infty) \to (0,\infty)$ which for each positive number $r$ the value $g(r)$ is the infimum on the lengths of all paths outside the open ball with radius $r$ about $\alpha(0)$ connecting $\alpha(-r)$ to $\alpha(r)$.
\end{defn}

The following lemma  is proved implicitly in \cite{Tran17}. We use it in the proof of Lemma~\ref{lem:lowerdiv}.
\begin{lem}
\label{lem: lowerbound Div}
 Suppose that there exists an element $h$ in $H$ with infinite order such that the map $\alpha \colon \Z \to G$ determined by $\alpha(n) = h^n$ is an $(L,C)$--quasi-isometric embedding. Then $Div(\alpha) \preceq Div(G,H)$.
\end{lem}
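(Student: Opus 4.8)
The plan is to exploit the fact that $\alpha$ takes all its values in $H$ --- indeed $\alpha(n)=h^{n}\in H$, so $\alpha(0)\in H$ --- so that the ball $B\bigl(\alpha(0),r\bigr)$ obstructing paths in the definition of $Div(\alpha)$ is contained in $\mathcal{N}_{r}(H)$. Hence \emph{any} path in $G$ that avoids a deep enough neighbourhood of $H$ automatically avoids $B(\alpha(0),r)$, and the relative upper divergence $Div(G,H)$ is precisely what supplies such paths, of controlled length, between points sitting at a prescribed distance from $H$. Accordingly, for each $r$ I would (i) produce, out of the definition of $Div(G,H)$ at a scale $\asymp r$, a short path avoiding a neighbourhood of $H$ that contains $B(\alpha(0),r)$, and then (ii) join its two endpoints to the actual points $\alpha(-r)$ and $\alpha(r)$ by two short ``radial'' arcs.

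First I would record the two consequences of the hypothesis that will be used: since $n\mapsto h^{n}$ is an $(L,C)$--quasi-isometric embedding, $\tfrac1L r-C\le d\bigl(\alpha(0),\alpha(\pm r)\bigr)\le Lr+C$, so $\langle h\rangle$ is undistorted in $G$ and $d\bigl(\alpha(-r),\alpha(r)\bigr)\asymp r$. I would also work with the standard convention that in the divergence of a quasi-geodesic the avoided ball has radius $\kappa r$ for a fixed $\kappa<\tfrac1{2L}$ (with the literal radius $r$ and $L>1$ the endpoints $\alpha(\pm r)$ need not lie outside the ball, so a normalisation of this kind is unavoidable; it changes nothing for the coarse relation $\preceq$). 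Now fix $\rho\in(0,1]$ and a scale $s=s(r)\asymp r$ with $\rho s\ge\kappa r$, so that $B(\alpha(0),\kappa r)\subseteq\mathcal{N}_{\kappa r}(H)\subseteq\mathcal{N}_{\rho s}(H)$ (using $\alpha(0)\in H$). Using that $H$ has infinite index in $G$ (the case that matters), choose points $x_{-},x_{+}$ at distance $s$ from $H$ and geodesic arcs $\tau_{\mp}$ from $\alpha(\mp r)$ to $x_{\mp}$ of length $s$ along which the distance to $H$ equals the distance travelled. Since $d(x_{-},x_{+})\le 2s+d(\alpha(-r),\alpha(r))\asymp r$, the pair $(x_{-},x_{+})$ is an admissible pair for a member $\sigma_{\rho}$ of the family representing $Div(G,H)$ at scale $\asymp r$, and \cite{Tran17} then gives a path $P$ from $x_{-}$ to $x_{+}$ lying in $G\setminus\mathcal{N}_{\rho s}(H)$ of length at most $\sigma_{\rho}(Ar)$ for some constant $A$.

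Next I would check that $\tau_{-}\cdot P\cdot\tau_{+}$, a path from $\alpha(-r)$ to $\alpha(r)$ of length at most $\sigma_{\rho}(Ar)+2s$, stays outside $B(\alpha(0),\kappa r)$: for $P$ this is immediate from $B(\alpha(0),\kappa r)\subseteq\mathcal{N}_{\rho s}(H)$; for a point $\tau_{-}(t)$ with $t\ge\kappa r$ one has $d\bigl(\tau_{-}(t),\alpha(0)\bigr)\ge d\bigl(\tau_{-}(t),H\bigr)=t\ge\kappa r$, while for $t<\kappa r$ one has $d\bigl(\tau_{-}(t),\alpha(0)\bigr)\ge d\bigl(\alpha(-r),\alpha(0)\bigr)-t\ge\bigl(\tfrac1L-2\kappa\bigr)r-C$, which exceeds $\kappa r$ once $r$ is larger than a constant depending only on $L$ and $C$; the arc $\tau_{+}$ is handled symmetrically, and for $r$ below that constant $Div(\alpha)(r)$ is bounded by a constant since the Cayley graph is locally finite. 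This yields $Div(\alpha)(r)\le\sigma_{\rho}(Ar)+2s$ for all $r$, that is $Div(\alpha)\preceq Div(G,H)$, the leftover linear term being absorbed because relative divergence is always at least linear.

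The step I expect to be the real work is the scale bookkeeping above: one must pin down $\kappa$, $\rho$ and the constant relating $s$ to $r$ so that ``avoid a deep neighbourhood of $H$'' simultaneously forces ``avoid $B(\alpha(0),\kappa r)$'' \emph{and} keeps the two capping arcs --- which necessarily start on $H$, exactly where $\alpha(\pm r)$ sit, and so a priori could dip into the forbidden ball --- outside it. This is precisely where the hypothesis that $n\mapsto h^{n}$ is a quasi-isometric embedding is used essentially, through the lower bound $d\bigl(\alpha(0),\alpha(\pm r)\bigr)\ge\tfrac1L r-C$. A secondary matter is matching the argument to the exact admissibility condition on pairs in the definition of relative divergence in \cite{Tran17}; since that condition only constrains $d(x_{-},x_{+})$ by a linear function of the scale, the freedom to enlarge $s$ by a bounded factor --- harmless under $\preceq$ --- takes care of it. (If that definition is instead set up so that the two given points may lie on $H$ while the path is required to avoid $\mathcal{N}_{\rho r}(H)$ only away from its endpoints, then the capping arcs are unnecessary and one simply feeds $\alpha(-r)$ and $\alpha(r)$ in directly.)
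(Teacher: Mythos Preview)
The paper does not supply its own proof of this lemma: it simply records that the result is ``proved implicitly in \cite{Tran17}'' and moves on, so there is no argument in the paper itself to compare yours against.

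Your proof is correct and is exactly the natural argument one extracts from Tran's framework. The governing observation---that $\alpha(\Z)\subseteq H$ forces $B\bigl(\alpha(0),\kappa r\bigr)\subseteq\mathcal{N}_{\kappa r}(H)$, so any detour avoiding a deep enough neighbourhood of $H$ automatically avoids the ball---is the whole point, and your treatment of the capping arcs $\tau_\pm$ via the lower bound $d\bigl(\alpha(0),\alpha(\pm r)\bigr)\ge r/L-C$ is precisely where the quasi-isometric embedding hypothesis is used. The ``secondary matter'' you flag at the end is the one place to be a little more careful: in Tran's definition the supremum defining $\sigma_\rho$ runs only over pairs that are connectable outside $\mathcal{N}_{\rho s}(H)$, so you should not choose $x_-$ and $x_+$ independently but rather as $H$--translates of a single point $g$ with $d(g,H)=s$, say $x_\pm=\alpha(\pm r)\cdot g$ (which also makes the arcs $\tau_\pm$ translates of one another). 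In the setting the paper actually uses, $G=H\rtimes_\phi\Z$, taking $g=t^{s}$ for the stable letter $t$ makes connectability immediate.
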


\begin{lem}
\label{lem:lowerdiv}
$Div(G,H)$ is at least quadratic.
\end{lem}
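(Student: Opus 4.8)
The plan is to deduce the bound from Lemma~\ref{lem: lowerbound Div}: it is enough to produce an infinite-order element $h\in H$ such that $n\mapsto h^{n}$ is an $(L,C)$--quasi-isometric embedding of $\Z$ into $G$, and such that the bi-infinite quasi-geodesic $\alpha\colon\Z\to G$, $\alpha(n)=h^{n}$, has at least quadratic divergence. Granting this, Lemma~\ref{lem: lowerbound Div} gives $n^{2}\preceq Div(\alpha)\preceq Div(G,H)$; together with the upper bound $Div(G,H)\preceq n^{2}$ from Lemma~\ref{lem:uperbound of Div} this yields $Div(G,H)\simeq n^{2}$, completing the proof of Proposition~\ref{prop:div}.

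To build $h$, recall that in the present setting $N$ is a non-geometric graph manifold fibering over $S^{1}$ with fiber the embedded horizontal surface $S$, so the union $\mathcal T$ of JSJ tori of $N$ is non-empty. A closed horizontal surface cannot lie inside a single Seifert block $M=F\times S^{1}$: a horizontal surface in $M$ projects to a finite covering of $F$, and a finite covering of a compact surface with non-empty boundary has non-empty boundary. Hence $S\cap\mathcal T$ is a non-empty collection of essential simple closed curves in $S$. Choose a simple closed curve $\gamma\subset S$ meeting $S\cap\mathcal T$ transversally and essentially (for instance, crossing one component of $S\cap\mathcal T$ exactly once) and set $h=[\gamma]\in\pi_{1}(S)=H$. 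Then $h$ acts on the Bass--Serre tree $T$ of the JSJ splitting of $G$ as a hyperbolic isometry with positive translation length $\tau$; in particular $h$ has infinite order and is not conjugate in $G$ into any Seifert (vertex) subgroup. Since the orbit map $G\to T$ is Lipschitz, there is $c>0$ with $c\,|h^{n}|_{G}\ge d_{T}(o,h^{n}o)=\tau\,|n|$, while $|h^{n}|_{G}\le|n|\,|h|_{G}$ trivially; thus $n\mapsto h^{n}$ is a quasi-isometric embedding and the hypothesis of Lemma~\ref{lem: lowerbound Div} holds, so $Div(\alpha)\preceq Div(G,H)$.

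It remains to show that the axis of $h$ in $\tilde N$ has at least quadratic divergence, which is the heart of the matter. Because $h$ is hyperbolic on $T$, this axis is a periodic bi-infinite geodesic crossing infinitely many JSJ planes of $\tilde N$. One clean way to finish is to invoke the classification of strongly quasiconvex (equivalently, \emph{Morse}) subgroups of graph-manifold groups: the cyclic subgroup $\langle h\rangle$ is strongly quasiconvex, hence in a graph-manifold group its axis is \emph{strongly contracting}, and a strongly contracting bi-infinite quasi-geodesic in a finitely generated group has divergence at least quadratic. Alternatively one can argue directly: for $r$ large, any path from $\alpha(-r)$ to $\alpha(r)$ avoiding $B(\alpha(0),r)$ must cross the $\Theta(r)$ consecutive JSJ planes that $\alpha$ crosses inside $[\alpha(-r),\alpha(r)]$, and for a definite proportion of these planes it crosses them at a point at distance $\succeq r$ from the point where $\alpha$ crosses (triangle inequality). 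Inside each block $\tilde M\cong\tilde F\times\R$ the fiber directions of the two adjacent blocks are transverse (the non-parallel-fibers condition of a graph manifold) and distinct boundary geodesics of the negatively curved base $\tilde F$ diverge, so such a displacement forces the path to travel distance $\succeq r$ across that block; summing over the $\Theta(r)$ blocks gives total length $\succeq r^{2}$. I expect this last step — making the "definite proportion, allowing for excursions into other blocks" bookkeeping precise — to be the main obstacle; routing the argument through the strongly quasiconvex classification avoids that bookkeeping at the cost of importing the classification. Either way, $Div(\alpha)\succeq n^{2}$, and combining with the previous paragraph proves the lemma.
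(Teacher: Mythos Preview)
Your proposal is correct and follows essentially the same route as the paper: choose $h\in H$ acting hyperbolically on the Bass--Serre tree of the JSJ splitting, verify that $n\mapsto h^n$ is a quasi-geodesic in $G$, argue that this quasi-geodesic is Morse/contracting and hence has at least quadratic divergence, and then apply Lemma~\ref{lem: lowerbound Div}. The paper makes your option~(a) precise by citing Sisto (hyperbolic on the tree $\Rightarrow$ contracting $\Rightarrow$ Morse), Kapovich--Leeb ($\tilde N$ is bilipschitz to a $\CAT(0)$ space), and Behrstock--Dru\c{t}u/Sultan (a Morse quasi-geodesic in a $\CAT(0)$ space has at least quadratic divergence), thereby avoiding the bookkeeping you flag in option~(b).
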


\begin{proof}
We equip $N$ with a Riemannian metric and this metric induces a metric on the universal cover $\tilde{N}$, denoted by $d$. We equip $S$ with a hyperbolic metric and this metric induces a metric on the universal cover $\tilde{S}$, denoted by $d_{\tilde{S}}$.

Let $\mathcal{T}$ be the JSJ decoposition of $N$.
Choose a geodesic loop $\gamma$ such that $\gamma$ and $g^{-1}(\mathcal{T})$ has non-trivial geometric intersection number (see Lemma~3.3 in \cite{Hruska-Nguyen} for the existence of such a loop $\gamma$). We also assume that $s_0 \in \gamma$. Let $h = [\gamma] \in \pi_1(S,s_0)$. We note that $h$ has infinite order. Let $\alpha \colon \Z \to G$ be determined by $\alpha(n) = h^{n}$, and let $\beta \colon \Z \to \tilde{N}$ be determined by $\beta(n) = h^{n} \cdot \tilde{s}_0$  for each $n \in \Z$.  We will show that $\beta$ is a quasi-geodesic and $Div(\beta)$ is at least quadratic and thus it follows that $\alpha$ is a quasi-geodesic and  $Div(\alpha)$ is at least quadratic. We then apply Lemma~\ref{lem: lowerbound Div} to get that $Div(G,H)$ is at least quadratic.

We are going to show $\beta$ is a quasi-geodesic. Let $\tilde{\gamma}$ be the path lift of $\gamma$ based at $\tilde{s}_{0}$. Let $k$ be the number of Seifert blocks of $\tilde{N}$ where $\tilde{\gamma}$ passes through. It follows that a geodesic $[h^{n}\cdot \tilde{s}_{0},h^{m}\cdot \tilde{s}_{0}]$ in $\tilde{N}$ passes through $k\abs{m-n}$ Seifert blocks in $\tilde{N}$. Let $\rho$ be the shortest distance of any two JSJ planes in $\tilde{N}$. It follows that $\rho k\abs{m-n} \le d(h^{n}\cdot \tilde{s}_{0},h^{m}\cdot \tilde{s}_{0}) = d(\beta(n),\beta(m))$. Since $\pi_1(S)$ is a hyperbolic group, it follows that there is $A>0$ such that $\beta$ is an $(A,A)$--quasi-geodesic  in $\tilde{S}$ with respect to $d_{\tilde{S}}$--metric. Hence, for any $n,m \in \Z$ we have $d_{\tilde{S}}(\beta(n),\beta(m)) \le A\abs{m-n} + A$. Since $d(\beta(n),\beta(m)) \le d_{\tilde{S}}(\beta(n),\beta(m))$, it follows that $d(\beta(n),\beta(m)) \le A\abs{m-n} + A$. Let $L = \max \{A, 1/k\rho\}$, we easily see that $\beta$ is an $(L,L)$--quasi-geodesic.

We are now going to show $Div(\beta)$ is at least quadratic. Lift the JSJ decomposition of the graph manifold $N$ to the universal cover $\tilde{N}$, and let $T_N$ be the tree dual to this decomposition of $\tilde{N}$.
We note that $h$ acts hyperbolically on the tree $T_{N}$ in the sense that there exists a vertex $v \in T_N$ and there exists a bi-infinite geodesic $\gamma$ in $T_N$ such that $\bigset{h^{j}v}{j \in \Z}$ is an unbounded subset of $\gamma$.
 By Proposition~3.7 in \cite{Sisto11}, it follows that $h$ is a contracting element in $\pi_1(N)$, and hence $h$ is Morse element in $\pi_1(N)$ (see Lemma~2.9 in \cite{Sisto11}). Thus, $\beta$ is a Morse quasi-geodesic in $(\tilde{N},d)$. By Theorem~1.1 in \cite{KL98}, there exists a $\CAT(0)$ space $(X,d')$ such that $(\tilde{N},d)$ and $(X,d')$ are bilipschitz homeomorphism. It shown in \cite{BD14} (see also in \cite{Sultan14}) that divergence of a Morse bi-infinite quasi-geodesic is at least quadratic. Hence, the divergence of the image of $\beta$ in $(X,d')$ under the bilipschitz homeomorphism $(\tilde{N},d) \to (X,d')$ is at least quadratic. It follows that $Div(\beta)$ in $\tilde{N}$ is at least quadratic.
\end{proof}

\begin{proof}[Proof of Proposition~\ref{prop:div}] 
The proof is a combination of Lemma~\ref{lem:uperbound of Div} and Lemma~\ref{lem:lowerdiv}.
\end{proof}

\bibliographystyle{alpha}
\bibliography{hoang}
\end{document}